\nonstopmode \numberwithin{equation}{section}
\nonstopmode \numberwithin{equation}{section}
\theoremstyle{plain}
\newtheorem{conj}{Conjecture}
\theoremstyle{definition}
\newtheorem{defn}{Definition}[section]
\newtheorem{thm}{Theorem}[section]
\newtheorem{prob}{Problem}[section]
\newtheorem{cor}{Corollary}[section]
\newtheorem{prop}{Proposition}[section]
\newtheorem{rem}{Remark}[section]
\newtheorem{lem}{Lemma}[section]
\newcounter{minutes}\setcounter{minutes}{\time}
\newcounter{hours}\setcounter{hours}{\time}
\newcounter {own}
\def\theown {\thesection       .\arabic{own}}
\newenvironment{pf}[1][]{%
 \vskip 3mm
 \noindent
 \ifthenelse{\equal{#1}{}}%
  {{\slshape Proof. }}%
  {{\slshape #1.} }%
 }%
{\qed\bigskip}
\newcounter{alphabet}
\theoremstyle{plain}
\newtheorem*{thmA}{Theorem A}
\newtheorem*{thmB}{Theorem B}
\newtheorem*{thmC}{Theorem C}
\newtheorem*{thmD}{Theorem D}
\newtheorem*{thmE}{Theorem E}
\newtheorem*{thmF}{Theorem F}
\newtheorem*{thmG}{Theorem G}
\newtheorem*{lemA}{Lemma A}
\def\be{\begin{equation}}
\def\ee{\end{equation}}
\newcommand{\bee}{\begin{enumerate}}
\newcommand{\eee}{\end{enumerate}}
\newcommand{\blem}{\begin{lem}}
\newcommand{\elem}{\end{lem}}
\newcommand{\bthm}{\begin{thm}}
\newcommand{\ethm}{\end{thm}}
\newcommand{\bcor}{\begin{cor}}
\newcommand{\ecor}{\end{cor}}
\newcommand{\beg}{\begin{examp}}
\newcommand{\eeg}{\end{examp}}
\newcommand{\begs}{\begin{examples}}
\newcommand{\eegs}{\end{examples}}
\newcommand{\bdefn}{\begin{defn}}
\newcommand{\edefn}{\end{defn}}
\newcommand{\bprob}{\begin{prob}}
\newcommand{\eprob}{\end{prob}}
\newcommand{\bei}{\begin{itemize}}
\newcommand{\eei}{\end{itemize}}
\newcommand{\bcon}{\begin{conj}}
\newcommand{\econ}{\end{conj}}
\newcommand{\bcons}{\begin{conjs}}
\newcommand{\econs}{\end{conjs}}
\newcommand{\bprop}{\begin{prop}}
\newcommand{\eprop}{\end{prop}}
\newcommand{\br}{\begin{rem}}
\newcommand{\er}{\end{rem}}
\newcommand{\brs}{\begin{rems}}
\newcommand{\ers}{\end{rems}}
\newcommand{\bo}{\begin{obser}}
\newcommand{\eo}{\end{obser}}
\newcommand{\bos}{\begin{obsers}}
\newcommand{\eos}{\end{obsers}}
\newcommand{\bpf}{\begin{pf}}
\newcommand{\epf}{\end{pf}}
\newcommand{\ba}{\begin{array}}
\newcommand{\ea}{\end{array}}
\newcommand{\beq}{\begin{eqnarray}}
\newcommand{\beqq}{\begin{eqnarray*}}
\newcommand{\eeq}{\end{eqnarray}}
\newcommand{\eeqq}{\end{eqnarray*}}
\begin{document}

\title{Bohr-Rogosinski Inequalities for bounded analytic functions}

\author{Molla Basir Ahamed}
\address{Molla Basir Ahamed, Department of Mathematics, Jadavpur University, Kolkata-700032, West Bengal,India.}
\email{mbahamed.math@jadavpuruniversity.in}
\author{Partha Pratim Roy}
\address{Partha Pratim Roy, Department of Mathematics, Jadavpur University, Kolkata-700032, West Bengal,India.}
\email{pproy.math.rs@jadavpuruniversity.in}

\subjclass[{AMS} Subject Classification:]{Primary 30A10, 30H05, 30C35, Secondary 30C45}
\keywords{Analytic functions, Bohr inequality, Bohr-Rogosinski inequality, Schwarz-Pick lemma, univalent function, convex function, family of subordinations.}

\def\thefootnote{}
\footnotetext{ {\tiny File:~\jobname.tex,
printed: \number\year-\number\month-\number\day,
          \thehours.\ifnum\theminutes<10{0}\fi\theminutes }
} \makeatletter\def\thefootnote{\@arabic\c@footnote}\makeatother

\begin{abstract} 
In this article, we analyze refined and improved versions of the classical Bohr inequality for the function class $\mathcal{B}$, which consists of analytic self- mappings defined on the unit disk $\mathbb{D}$. We improve the Bohr-Rogosinski inequality for the class $\mathcal{B}$ of analytic self-maps by incorporating the area measure of sub-disks $\mathbb{D}_r$ in $\mathbb{D}$. 
\end{abstract}

\maketitle
\pagestyle{myheadings}
\markboth{M. B. Ahamed and P. P. Roy}{Bohr-Rogosinski Inequalities for bounded analytic functions}

\section{Introduction}
Let $ \mathbb{D}:=\{z\in \mathbb{C}:|z|<1\} $ denote the open unit disk in $ \mathbb{C} $. Harald Bohr in $1914$ (see \cite{Bohr-1914}) proved that if $H_\infty$ denotes the class of all bounded analytic functions $f$ on $\mathbb{D}$, then the following inequality holds
\begin{align}\label{Eq-1.1}
	\mathcal{B}_0(f,r):=|a_0|+\sum_{n=1}^{\infty}|a_n|r^n\leq||f||_\infty:=\displaystyle\sup_{z\in\mathbb{D}}|f(z)| \;\;\mbox{for}\;\; 0\leq r\leq\frac{1}{3},
\end{align}
where $a_k=f^{(k)}(0)/k!$ for $k\geq 0$. The constant $1/3$ is called the Bohr radius and the inequality in \eqref{Eq-1.1} is called Bohr inequality for the class $\mathcal{B}$ of analytic self-maps on $\mathbb{D}$. Henceforth, if there exists a positive real number $r_0$ such that an inequality of the form \eqref{Eq-1.1} holds for every elements of a class $\mathcal{M}$ for $0\leq r\leq r_0$ and fails when $r>r_0$, then we shall say that $r_0$ is an sharp bound for $r$ in the inequality w.r.t. to class $\mathcal{M}$.\vspace{1.2mm}

 The constant $ 1/3 $ and the inequality \eqref{Eq-1.1} are called respectively, the Bohr radius and the Bohr's inequality, for the class $\mathcal{B}$. Moreover, for $ f_a(z):={(a-z)}/{(1-az)},$ where $ a\in [0,1) $, it follows easily that $ \mathcal{B}_0(f_a, r)>||f||_{\infty} $ if, and only if, $ r>1/(1+2a) $, which shows that  the constant $ 1/3 $ is best possible as the limiting case $ a\rightarrow 1^{-} $ suggests. Several other proofs of this interesting inequality were given in different articles  (see, e.g. \cite{Paulsen-PLMS-2002,Tomic-1962,Sidon-1927}). \vspace{1.5mm}

In the majorant series $ \mathcal{B}_0(f,r)$ of the function $ f\in\mathcal{B} $, the beginning terms play some significant in the related discussion about the Bohr inequality. For instance, in the case of  $|a_0|=0$, Tomic \cite{Tomic-1962} has proved the inequality \eqref{Eq-1.1}  for $0\leq r \leq {1}/{2}$ and if the term $|a_0|$ is replaced by  $|a_0|^2$, then the constant $1/3$ can be replaced by $1/2$. Moreover, if  the term $|a_0|$ in $ \mathcal{B}_0(f,r) $ is replaced by $|f(z)|$, then the constant $1/3$ can be replaced by sharp constant $\sqrt{5}-2$ established in \cite{Ponnusamy-2017} (see also \cite{Kayu-Kham-Ponnu-2021-JMAA}). \vspace{1.2mm} 

Bohr’s theorem received greater interest after it was used by Dixon \cite{Dixon & BLMS & 1995} to characterize Banach algebras that satisfy von Neumann inequality. The generalization of Bohr’s theorem is now an active area of research: Aizenberg \emph{et al.} \cite{Aizn-PAMS-2000,Aizeberg-PLMS-2001,Aizenberg-SM-2005,Aigenber-CMFT-2009}, and Aytuna and Djakov \cite{Ayt & Dja & BLMS & 2013} studied the Bohr property of bases for holomorphic functions; Ali \emph{et al.} \cite{Ali & Ng & CVEE & 2016} found the Bohr radius for the class of starlike logharmonic mappings; while Paulsen and Singh \cite{Paulsen-PLMS-2002,Paulsen-PAMS-2004,Paulsen-BLMS-2006} extended the Bohr inequality to Banach algebras. For detailed information regarding the development of the Bohr inequality, the readers are referred to the survey articles \cite{Abu Muhanna-Survey} and \cite{Ponnusamy-Vijayakumar}. For the recent developments, we refer to the articles \cite{Ahamed-AASFM-2022,Ali-RM-2019,Alkhaleefah-PAMS-2019,Bhowmik-CMFT-2019, Liu-JMAA-2021, Das-JMAA-2022,Kay & Pon & AASFM & 2019,Kumar-Sahoo-MJM-2021,Lata-Singh-PAMS-2022,Ponnusamy-CMFT-2020,Ponnusamy-JMAA-2022, Ponnusamy-RM-2021, Allu-CMB-2022} and references therein.  \vspace{1.2mm}

Before proceeding with the discussion, let us introduce the necessary notations.
\subsection{\bf Basic Notations}  Let 
\begin{align*}
	\mathcal{B}=\{f\in H_{\infty} \;:\;||f||_{\infty}\leq 1\}.
\end{align*} Also, for $f(z)=\sum_{n=0}^{\infty}a_nz^n\in \mathcal{B}$ and $f_0(z):=f(z)-f(0)$, we let for convenience 
\begin{align*}
	B_k(f,r):=\sum_{n=k}^{\infty}|a_n|r^n \;\mbox{for }k\geq 0,\; ||f_0||^2_r:=\sum_{n=1}^{\infty}|a_n|^2r^{2n} \;\mbox{and} \;||f_1||^2_r:=\sum_{n=2}^{\infty}|a_n|^2r^{2n}
\end{align*}
Further, we define the quantity $A(f_0,r)$ by 
\begin{align*}
		A(f_0,r):=\left(\dfrac{1}{1+|a_0|}+\dfrac{r}{1-r}\right)||f_0||^2_r.
\end{align*}
The concept of the Rogosinski radius, like that of the Bohr radius, was introduced in \cite{Rogosinski-1923}. However, the Rogosinski radius has not been studied as extensively as the Bohr radius (see \cite{Landau-Gaier-1986,Schur-Szego-1925}). The definition of the Rogosinski radius is provided below.
 \begin{thmA}(Rogosinski Theorem)\label{th-1.2}
 	If $ g(z)=\sum_{n=0}^{\infty}b_nz^n \in \mathcal{B},$  then for every $ N\geq 1 $, 
 	\begin{align}\label{e-0.1}
 		\bigg|\sum_{n=0}^{N-1}b_nz^n\bigg|\leq 1\; \mbox{for}\; |z|\leq \frac{1}{2}.
 	\end{align}
 	The radius $ 1/2 $ is best possible and is called Rogosinski radius.
 \end{thmA}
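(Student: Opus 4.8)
The plan is to write the partial sum $s_{N-1}(z):=\sum_{n=0}^{N-1}b_nz^n$ as an average of the boundary values of $g$ against a \emph{nonnegative} kernel of total mass $1$, so that $\|g\|_\infty\le 1$ immediately forces $|s_{N-1}(z)|\le 1$. First I would reduce to $z=r\in[0,1/2]$: since $g_\alpha(\zeta):=g(e^{i\alpha}\zeta)$ again lies in $\mathcal B$ with coefficients $b_ne^{in\alpha}$, the value $s_{N-1}(re^{i\alpha})$ for $g$ equals the partial sum of $g_\alpha$ evaluated at the real point $r$, so it suffices to bound $s_{N-1}(r)$ uniformly over $\mathcal B$. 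Next, using that $g\in H_\infty$ has radial boundary values whose negative Fourier coefficients vanish, I would integrate $g$ against the symmetric truncated Poisson kernel
\[
p_N(\theta):=\sum_{|m|\le N-1}r^{|m|}e^{im\theta}=1+2\sum_{n=1}^{N-1}r^n\cos n\theta .
\]
Since $\frac{1}{2\pi}\int_{-\pi}^{\pi}g(e^{i\theta})e^{im\theta}\,d\theta$ equals $b_{-m}$ for $m\le 0$ and vanishes for $m>0$, only the terms $m\le0$ survive and one obtains the exact identity $s_{N-1}(r)=\frac{1}{2\pi}\int_{-\pi}^{\pi}g(e^{i\theta})\,p_N(\theta)\,d\theta$, while the constant Fourier coefficient of $p_N$ gives $\frac{1}{2\pi}\int p_N=1$.

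The crux, which I expect to be the main obstacle, is the positivity of this kernel: the argument works exactly when $p_N(\theta)\ge 0$ for all $\theta$. To establish it I would set $w=re^{i\theta}$ and rewrite $p_N=\operatorname{Re}\!\left(\dfrac{1+w-2w^N}{1-w}\right)$; multiplying through by $|1-w|^2>0$, a short computation using $|w|=r$ and $w^N\bar w=r^2w^{N-1}$ reduces positivity to $1-r^2-2\operatorname{Re}(w^N)+2r^2\operatorname{Re}(w^{N-1})\ge 0$. Estimating the two non-constant terms trivially (each real part is at most $r^N$, respectively $r^{N-1}$, in modulus), and using $r^N\le r^2$ for $N\ge2$ while $p_1\equiv 1$ handles $N=1$, bounds the left side below by $1-3r^2-2r^3=(1-2r)(1+r)^2$, which is nonnegative precisely when $r\le 1/2$. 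With $p_N\ge 0$ in hand, $p_N(\theta)\frac{d\theta}{2\pi}$ is a probability measure, so $|s_{N-1}(r)|\le\frac{1}{2\pi}\int_{-\pi}^{\pi}|g(e^{i\theta})|\,p_N(\theta)\,d\theta\le 1$, which gives the inequality for $|z|\le 1/2$.

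Finally, for sharpness I would show that already $N=2$ pins the radius at $1/2$. Taking the automorphism $g(z)=\dfrac{z+a}{1+az}\in\mathcal B$ with $a\in(0,1)$ yields $b_0=a$, $b_1=1-a^2$, hence $s_1(r)=a+(1-a^2)r$; optimizing in $a$ gives $a=1/(2r)$ for $r\ge 1/2$ and the value $r+\frac{1}{4r}$, which strictly exceeds $1$ for every $r>1/2$. Thus no radius larger than $1/2$ can work, so $1/2$ is best possible. The one point needing care is the boundary-value step for a general $g\in H_\infty$, i.e.\ justifying term-by-term integration against $p_N$; this is routine once one argues first for the dilates $g(\rho\,\cdot)$ and lets $\rho\to 1^-$.
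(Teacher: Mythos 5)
The paper never proves Theorem A: it is quoted as a classical result, with the underlying literature cited as \cite{Rogosinski-1923}, \cite{Landau-Gaier-1986} and \cite{Schur-Szego-1925}, and the only proofs the paper supplies are those of its Theorems 2.1 and 2.2. So there is no in-paper argument to compare yours against; the relevant question is whether your proof stands on its own, and it does. The rotation reduction is valid because $g(e^{i\alpha}\zeta)\in\mathcal{B}$ has coefficients $b_ne^{in\alpha}$; the identity $s_{N-1}(r)=\frac{1}{2\pi}\int_{-\pi}^{\pi}g(e^{i\theta})p_N(\theta)\,d\theta$ is exactly the symmetrization that exploits the vanishing of the negative-frequency coefficients of $g$; and the positivity computation checks out:
\begin{align*}
p_N(\theta)\,|1-w|^2&=1-r^2-2\operatorname{Re}(w^N)+2r^2\operatorname{Re}(w^{N-1})\\
&\geq 1-r^2-2r^N-2r^{N+1}\geq 1-3r^2-2r^3=(1-2r)(1+r)^2\geq 0
\end{align*}
for $N\geq 2$ and $r\leq 1/2$, while $p_1\equiv 1$ covers $N=1$. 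The sharpness step is also correct: for $g(z)=(z+a)/(1+az)$ one has $s_1(r)=a+(1-a^2)r$, and the choice $a=1/(2r)\in(0,1)$ for $r>1/2$ gives $s_1(r)=r+\frac{1}{4r}>1$, so no radius beyond $1/2$ works even for $N=2$. Your closing caveat about boundary values is genuine but is properly discharged by the dilation argument you sketch: for $g_\rho(z):=g(\rho z)$ the series $\sum b_n\rho^ne^{in\theta}$ converges uniformly in $\theta$ (since $|b_n|\leq 1$), so term-by-term integration against the trigonometric polynomial $p_N$ is legitimate, and letting $\rho\to 1^-$ recovers the inequality for $g$. This makes your write-up a complete, self-contained proof of a statement the paper only cites; it is in the spirit of the classical positive-kernel arguments but is an independent verification rather than a variant of anything in the paper.
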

Inspired by the articles of \cite{Ponnusamy-2017} and \cite{Kayu-Kham-Ponnu-2021-JMAA}, the Bohr-Rogosinski sum, $R_n^f(z)$, for a function $f \in \mathcal{B}$ with Taylor series $f(z) = \sum_{n=0}^{\infty} a_n z^n$, is defined by
 \begin{align}\label{e-0.2}
 	R^f_n(z):=|f(z)|+\sum_{n=N}^{\infty}|a_n|r^n, \; |z|=r.
 \end{align}
 An interesting observation is that for $N=1$, the quantity defined in $\eqref{e-0.2}$ is related to the classical Bohr sum where $|f(0)|$ is replaced by $|f(z)|$. The inequality $R_n^f(z) \leq 1$ is known as the Bohr-Rogosinski inequality. If $B$ and $R$ denote the Bohr radius and the Bohr–Rogosinski radius, respectively, it is easy to see that $B = 1/3 < 1/2 = R$. For recent developments concerning the Bohr-Rogosinski phenomenon, the reader is referred to the articles by Das \cite{Das-JMAA-2022}, Kayumov \textit{et al.} \cite{Kayu-Kham-Ponnu-2021-JMAA}, and the references cited therein.\vspace{1.2mm}
 
 It is worth mentioning that various questions concerning the related new concept, the Bohr-Rogosinski phenomenon, including its refined forms, are currently being studied (see, e.g., \cite{Aizenberg-AMP-2012,Huang-Liu-Pon-AMP-2020,Liu-Liu-Ponnusamy-2021}). However, to the best of our knowledge, unlike the refined versions of the Bohr inequality for different classes of functions, there is no strengthened version of the Rogosinski inequality $\eqref{e-0.1}$ that holds for $r \leq 1/2$ and for all $N \in \mathbb{N}$. Motivated by this gap, we refine the Rogosinski inequality for the classes of functions $\mathcal{B}$ and the subordination class. We also improve a certain refined Rogosinski inequality in terms of the quantity $S_r$, which is the planar integral of an analytic function defined on $\mathbb{D}$. Consequently, in this article, the Bohr–Rogosinski inequality is considered to be analogous to a Bohr-type inequality, which is a variant of the inequality found in $\eqref{Eq-1.1}$ or $\eqref{e-0.2}$. \vspace{1.2mm}

  For a function $f(z) = \sum_{n=0}^{\infty} a_n z^n$, we define $f_0(z) := f(z) - f(0)$ for convenience. Throughout this paper, we will use the following notations
  \begin{align*}
  	{||f_0||}^2:=\sum_{n=1}^{\infty}|a_n|^2r^{2n}\;\;\mbox{and}\;\; B_N(f,r):=\sum_{n=N}^{\infty}|a_n|r^n \;\; \mbox{for}\;\; N\in \mathbb{N}\cup\{0\}. 
  \end{align*}
 Using the foundation of the Rogosinski inequality and the Rogosinski radius (first studied in \cite{Rogosinski-1923}), Kayumov \textit{et al.} \cite{Kayu-Kham-Ponnu-2021-JMAA} (and \cite{Ponnusamy-2017}) obtained the Bohr-Rogosinski inequality and Bohr-Rogosinski radius for the class $\mathcal{B}$.
\begin{thmB}\cite[Corollary 1]{Kayu-Kham-Ponnu-2021-JMAA}\label{Cor-1.1}
	Suppose that $ f(z)=\sum_{n=0}^{\infty}a_nz^n\in\mathcal{B} $. Then for $N\in \mathbb{N},$
	\begin{align*}
		|f(z)|+B_N(f,r)\leq 1\;\;\mbox{for}\;\; r\leq R_N,
	\end{align*}
	where $ R_N $ is the positive root of the equation $ 2(1+r)r^N-(1-r)^2=0 $. The radius $ R_N $ is the best possible. Moreover, 
	\begin{align*}
		|f(z)|^2+B_N(f,r)\leq 1\;\;\mbox{for}\;\; r\leq R^{\prime}_N,
	\end{align*}
	where $ R^{\prime}_N $ is the positive root of the equation $ (1+r)r^N-(1-r)^2=0 $. The radius $ R^{\prime}_N $ is the best possible.
\end{thmB}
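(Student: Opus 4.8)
The plan is to reduce both inequalities to the single real parameter $a=|a_0|\in[0,1)$ by invoking two standard estimates for $f\in\mathcal{B}$: the Schwarz--Pick bound
\[
|f(z)|\le \frac{|a_0|+r}{1+|a_0|r}\qquad(|z|=r),
\]
and the classical coefficient inequality $|a_n|\le 1-|a_0|^2$ for $n\ge1$. The latter I would justify in one line by an averaging trick: with $\zeta=e^{2\pi i/n}$ the function $g(z)=\frac1n\sum_{j=0}^{n-1}f(\zeta^j z)=a_0+a_nz^n+a_{2n}z^{2n}+\cdots$ lies in $\mathcal{B}$ and is a power series in $z^n$, so writing $g(z)=G(z^n)$ with $G\in\mathcal{B}$ and applying $|G'(0)|\le 1-|G(0)|^2$ gives $|a_n|\le 1-|a_0|^2$. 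Summing the geometric tail then yields
\[
B_N(f,r)=\sum_{n=N}^{\infty}|a_n|r^n\le (1-a^2)\frac{r^N}{1-r},\qquad a=|a_0|.
\]

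For the first inequality I would combine the two estimates and aim to prove $\frac{a+r}{1+ar}+(1-a^2)\frac{r^N}{1-r}\le 1$. Using the identity $1-\frac{a+r}{1+ar}=\frac{(1-a)(1-r)}{1+ar}$ and cancelling the factor $1-a$ (the case $a=1$ being an equality), this is equivalent to $(1+a)(1+ar)r^N\le(1-r)^2$. Since the left-hand side is strictly increasing in $a$ on $[0,1]$, its supremum is attained at $a=1$, where the condition becomes exactly $2(1+r)r^N\le(1-r)^2$; that is, $r\le R_N$. To see $R_N$ is well defined I would note $q(r):=2(1+r)r^N-(1-r)^2$ satisfies $q(0)=-1<0$, $q(1)=4>0$ and $q'>0$ on $(0,1)$, so $q$ has a unique zero $R_N\in(0,1)$ with $q(r)\le0\iff r\le R_N$.

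The squared inequality is handled the same way, replacing the Schwarz--Pick bound by $1-|f(z)|^2\ge\frac{(1-a^2)(1-r^2)}{(1+ar)^2}$, which follows from $(1+ar)^2-(a+r)^2=(1-a^2)(1-r^2)$. After cancelling the common factor $1-a^2$, the requirement $|f(z)|^2+B_N(f,r)\le1$ reduces to $r^N(1+ar)^2\le(1-r)^2(1+r)$; again the worst case is $a=1$, giving $(1+r)r^N\le(1-r)^2$, i.e. $r\le R_N'$, where $R_N'$ is the unique positive root of $(1+r)r^N-(1-r)^2=0$ (existence and uniqueness exactly as before). Note $R_N<R_N'$, consistent with replacing $|f(z)|$ by the smaller $|f(z)|^2$.

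Sharpness I would establish with the extremal maps $f_a(z)=\frac{a-z}{1-az}$, $a\in[0,1)$, for which $|a_0|=a$, $|a_n|=a^{n-1}(1-a^2)$ for $n\ge1$, $|f_a(-r)|=\frac{a+r}{1+ar}$, and hence $B_N(f_a,r)=(1-a^2)\frac{a^{N-1}r^N}{1-ar}$. Evaluating the Bohr--Rogosinski sum at $z=-r$ and expanding to first order as $a\to1^-$ (set $a=1-\varepsilon$), I expect
\[
|f_a(-r)|+B_N(f_a,r)=1+\varepsilon\Big(\tfrac{2r^N}{1-r}-\tfrac{1-r}{1+r}\Big)+o(\varepsilon),
\]
whose bracket is positive precisely when $2(1+r)r^N>(1-r)^2$, i.e. for every $r>R_N$; for the squared sum the same computation gives $1+2\varepsilon\big(\tfrac{r^N}{1-r}-\tfrac{1-r}{1+r}\big)+o(\varepsilon)$, positive for $r>R_N'$. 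The main obstacle is exactly this sharpness step: as $a\to1^-$ both the deficit $1-|f_a(-r)|$ and the tail $B_N(f_a,r)$ vanish to first order in $\varepsilon$, so the conclusion turns on a careful comparison of their leading coefficients rather than on any crude bound, and one must check that the factors $a^{N-1}$ and $(1-ar)^{-1}$ contribute only to higher order.
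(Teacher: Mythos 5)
Your proof is correct. Note that the paper itself does not prove this statement: it is Theorem B, quoted from Kayumov--Khammatova--Ponnusamy \cite{Kayu-Kham-Ponnu-2021-JMAA} as background, so there is no in-paper argument to compare against. Your route --- the Schwarz--Pick bound $|f(z)|\le (|a_0|+r)/(1+|a_0|r)$, the Wiener-type coefficient inequality $|a_n|\le 1-|a_0|^2$ via the root-of-unity averaging trick, reduction to the worst case $a\to 1^-$ after cancelling the factor $1-a$ (resp.\ $1-a^2$), and sharpness by a first-order expansion of $|f_a(-r)|+B_N(f_a,r)$ in $\varepsilon=1-a$ --- is exactly the standard argument used in that source, and all the computations (the identity $1-\tfrac{a+r}{1+ar}=\tfrac{(1-a)(1-r)}{1+ar}$, the monotonicity of $2(1+r)r^N-(1-r)^2$, and the leading coefficients $\tfrac{2r^N}{1-r}-\tfrac{1-r}{1+r}$ and $2\bigl(\tfrac{r^N}{1-r}-\tfrac{1-r}{1+r}\bigr)$) check out.
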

Following the initiation of work by Kayumov and Ponnusamy \cite{Kayumov-Ponnusamy-2018-CR Academy}, Liu et al. \cite{Liu-Shang-Xu-2018} considered several Bohr-type inequalities for the family $\mathcal{B}$, where the Taylor coefficients of the classical Bohr inequality were partly or completely replaced by higher-order derivatives of $f$. We recall only one such result here.
\begin{thmC}\cite[Theorem 2.1]{Liu-Shang-Xu-2018}
	Suppose that $f\in\mathcal{B}$ and $f(z)=\sum_{n=0}^{\infty} a_nz^n$. Then the following sharp inequality holds:
	\begin{align*}
		|f(z)|+|f^{\prime}(z)|r+\sum_{n=2}^{\infty}|a_n|r^n\leq1\;\;\mbox{for}\;\;|z|=r\leq\frac{\sqrt{17}-3}{4}.
		\end{align*}
\end{thmC}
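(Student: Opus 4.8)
The plan is to reduce the whole inequality to an elementary estimate in the two real parameters $a:=|a_0|=|f(0)|$ and $r$, controlling $|f(z)|$, $|f'(z)|\,r$ and the tail $\sum_{n=2}^\infty|a_n|r^n$ separately. The decisive point is that one must \emph{not} bound $|f'(z)|$ by the triangle inequality $\sum_{n\ge1}n|a_n|r^{n-1}$: that estimate is already lossy on the Möbius extremizer $(a-z)/(1-az)$ and produces a strictly smaller radius. Instead I would invoke the Schwarz--Pick lemma twice, since it is sharp exactly on disk automorphisms, which are the expected extremal functions here.

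Applying Schwarz--Pick at $0$ and $z$ gives $|f(z)|\le (a+r)/(1+ar)$, and its differential form gives $|f'(z)|\le(1-|f(z)|^2)/(1-r^2)$. Writing $t:=|f(z)|$, the function $\psi(t):=t+(1-t^2)r/(1-r^2)$ satisfies $\psi'(t)=1-2rt/(1-r^2)\ge0$ on $[0,1]$ whenever $r\le\sqrt2-1$, which holds since $(\sqrt{17}-3)/4<\sqrt2-1$. Hence $\psi$ is increasing and I may substitute $t=(a+r)/(1+ar)$; using the identity $1-\big((a+r)/(1+ar)\big)^2=(1-a^2)(1-r^2)/(1+ar)^2$ the first two terms collapse to
\begin{align*}
|f(z)|+|f'(z)|\,r\le\frac{a+r}{1+ar}+\frac{(1-a^2)r}{(1+ar)^2}.
\end{align*}
For the tail I would use the standard coefficient estimate $|a_n|\le 1-a^2$ $(n\ge1)$ for the class $\mathcal B$, so that $\sum_{n=2}^\infty|a_n|r^n\le(1-a^2)r^2/(1-r)$. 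Adding the two bounds reduces the theorem to the purely real-variable claim
\begin{align*}
\Psi(a,r):=\frac{a+r}{1+ar}+\frac{(1-a^2)r}{(1+ar)^2}+\frac{(1-a^2)r^2}{1-r}\le1
\end{align*}
for all $a\in[0,1]$ and $r\le(\sqrt{17}-3)/4$.

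I expect this last step to be the main obstacle, because $\Psi(1,r)=1$ identically, so the inequality is degenerate as $a\to1^-$ and no crude bound will detect the correct radius. I would clear denominators by multiplying through by $(1+ar)^2(1-r)>0$; the terms $(1+ar)^2(1-r)$ and $-(a+r)(1+ar)(1-r)$ combine via $1+ar-a-r=(1-a)(1-r)$ into $(1+ar)(1-a)(1-r)^2$, after which $1-a^2=(1-a)(1+a)$ lets me factor out $(1-a)\ge0$. The inequality becomes $R(a,r)\ge0$ with
\begin{align*}
R(a,r)=(1-3r+r^2)-(2r^2+r^3)a-(2r^3+r^4)a^2-r^4a^3.
\end{align*}
Every coefficient of $a,a^2,a^3$ is negative, so $R(\cdot,r)$ is decreasing on $[0,1]$ and its minimum is $R(1,r)=-(2r^2+3r-1)(r^2+1)$; since $(\sqrt{17}-3)/4$ is exactly the positive root of $2r^2+3r-1$, one has $R(1,r)\ge0$ precisely for $r\le(\sqrt{17}-3)/4$, closing the argument.

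For sharpness I would test $f_a(z)=(a-z)/(1-az)$ at $z=-r$, where $|f_a(-r)|=(a+r)/(1+ar)$, $|f_a'(-r)|=(1-a^2)/(1+ar)^2$ and $\sum_{n=2}^\infty|a_n|r^n=(1-a^2)ar^2/(1-ar)$. A first-order expansion in $a=1-\varepsilon$ shows that the left-hand side minus $1$ has the sign of $2r^2+3r-1$, hence becomes positive for every $r>(\sqrt{17}-3)/4$, which proves the radius is best possible.
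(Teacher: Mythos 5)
Your proof is correct: the two Schwarz--Pick estimates, the monotonicity of $\psi(t)=t+(1-t^2)r/(1-r^2)$ (valid since $(\sqrt{17}-3)/4<\sqrt{2}-1$), the tail bound via $|a_n|\le 1-|a_0|^2$, the reduction to $R(a,r)=(1-3r+r^2)-(2r^2+r^3)a-(2r^3+r^4)a^2-r^4a^3$ with the factorization $R(1,r)=-(2r^2+3r-1)(r^2+1)$, and the first-order sharpness expansion at $a=1-\varepsilon$ all check out. This is essentially the same route the paper itself follows in proving its stronger Theorem 2.1 (the paper only cites Theorem C from Liu--Shang--Xu without proof): Schwarz--Pick for $|f(z)|$ and for $|f'(z)|$ producing the term $r(1-a^2)/(1+ar)^2$, a tail estimate of the form $(1-a^2)r^2/(1-r)$, reduction to a real polynomial inequality in $a=|a_0|$ at the critical radius, and sharpness tested on $f_a(z)=(a-z)/(1-az)$; your endgame is simply cleaner because the absence of the extra terms $A(f_0,r)$ and $S_r$ lets you replace Lemma A by the elementary coefficient bound and close the polynomial analysis by hand rather than by Mathematica-assisted root-finding.
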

In $2021$, Liu et al. \cite{Liu-Liu-Ponnusamy-2021} presented an improved version of the above theorem.
\begin{thmD}\cite[Theorem 7]{Liu-Liu-Ponnusamy-2021}
	Suppose that $f\in\mathcal{B}$ and $f(z)=\sum_{n=0}^{\infty}a_nz^n$. Then 
	\begin{align*}
		|f(z)|+|f^{\prime}(z)|r+\sum_{n=2}^{\infty}|a_n|r^n+A(f_0,r)\leq1
	\end{align*}
	for $|z|=r\leq\frac{\sqrt{17}-3}{4}$ and the constant $\frac{\sqrt{17}-3}{4}$ is best possible.
\end{thmD}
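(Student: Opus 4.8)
The plan is to set $a:=|a_0|$ and to split the left-hand side into an \emph{analytic part} $|f(z)|+r|f'(z)|$ and a \emph{coefficient part} $\sum_{n=2}^{\infty}|a_n|r^n+A(f_0,r)$, bounding each by a quantity depending only on $a$ and $r=|z|$, and then reducing to a one-variable inequality. For the analytic part I would apply the Schwarz--Pick lemma twice, obtaining $|f(z)|\le (a+r)/(1+ar)$ and $|f'(z)|\le (1-|f(z)|^2)/(1-r^2)$. Because $t\mapsto t+r(1-t^2)/(1-r^2)$ increases on $[0,(1-r^2)/(2r)]$ and $(a+r)/(1+ar)$ lies in that interval for $r\le (\sqrt{17}-3)/4$, substituting the bound for $|f(z)|$ and using $1-((a+r)/(1+ar))^2=(1-a^2)(1-r^2)/(1+ar)^2$ gives
\begin{align*}
|f(z)|+r|f'(z)|\le \Phi(a,r):=\frac{a+r}{1+ar}+\frac{r(1-a^2)}{(1+ar)^2}.
\end{align*}

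The decisive ingredient is the sharp coefficient estimate
\begin{align*}
\sum_{n=2}^{\infty}|a_n|r^n+A(f_0,r)\le (1-a^2)\frac{r^2}{1-r},
\end{align*}
and I expect this to be the main obstacle. It must hold with equality for the Möbius maps $f_a(z)=(a-z)/(1-az)$, and it is strictly stronger than the refined Bohr inequality $\sum_{n\ge 1}|a_n|r^n+A(f_0,r)\le (1-a^2)r/(1-r)$, so it cannot be obtained merely by deleting the $|a_1|r$ term. To prove it I would rewrite it, using $(1-a^2)r^2/(1-r)=\sum_{n\ge 2}(1-a^2)r^n$, as
\begin{align*}
\sum_{n=2}^{\infty}\left[(1-a^2)-|a_n|\right]r^n\ge \left(\frac{1}{1+a}+\frac{r}{1-r}\right)\sum_{n=1}^{\infty}|a_n|^2r^{2n},
\end{align*}
and exploit the Schur parametrisation $f(z)-a_0=(1-a^2)\,\omega(z)/(1+\overline{a_0}\,\omega(z))$ with $\omega\in\mathcal{B}$, $\omega(0)=0$. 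The relations $a_1=(1-a^2)\omega_1$ and $a_2=(1-a^2)(\omega_2-\overline{a_0}\omega_1^2)$, together with the coefficient bounds $|a_n|\le 1-a^2$ and $\sum_{n\ge 1}|a_n|^2\le 1-a^2$, encode that a large $|a_1|$ forces the higher coefficients to be small, which is exactly what is needed to dominate the quadratic terms on the right.

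Granting both estimates, I would add them to obtain
\begin{align*}
|f(z)|+r|f'(z)|+\sum_{n=2}^{\infty}|a_n|r^n+A(f_0,r)\le \Phi(a,r)+(1-a^2)\frac{r^2}{1-r},
\end{align*}
so the theorem reduces to the scalar inequality $\Phi(a,r)+(1-a^2)r^2/(1-r)\le 1$ for $a\in[0,1)$ and $r\le (\sqrt{17}-3)/4$; this is precisely the scalar estimate underlying the proof of Theorem C. Factoring out $1-a$ turns it into the nonnegativity of
\begin{align*}
(1-r)^2(1+ar)-r(1+a)(1-r)-(1+a)r^2(1+ar)^2,
\end{align*}
whose extremal value is attained as $a\to 1$, where it equals $(1+r^2)(1-3r-2r^2)$. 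Since $1-3r-2r^2\ge 0$ exactly when $r\le (\sqrt{17}-3)/4$, the inequality follows, a short monotonicity check in $a$ covering the remaining values.

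For sharpness I would take $a_0=a\in(0,1)$ real and test $f_a(z)=(a-z)/(1-az)$ at $z=-r$: there $|f_a(z)|+r|f_a'(z)|=\Phi(a,r)$, while the coefficient part attains $(1-a^2)r^2/(1-r)$, so the left-hand side equals $\Phi(a,r)+(1-a^2)r^2/(1-r)$. For any fixed $r>(\sqrt{17}-3)/4$ this exceeds $1$ once $a$ is close enough to $1$, showing that the constant $(\sqrt{17}-3)/4$ cannot be enlarged.
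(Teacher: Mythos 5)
Your overall architecture is sound, and it is in fact the same route as the actual proof. (Note that the paper under review does not reprove Theorem D at all --- it is quoted from Liu--Liu--Ponnusamy --- but the identical machinery drives the paper's own Theorems \ref{th-3.2} and \ref{th-3.3}: the Schwarz--Pick bound \eqref{e-5.1}, the coefficient lemma quoted as Lemma A, a reduction to a scalar inequality in $(a,r)$, and sharpness via $f_a$.) I checked your scalar work and it is correct: the reduction to the nonnegativity of $(1-r)^2(1+ar)-r(1+a)(1-r)-(1+a)r^2(1+ar)^2$, its value $(1+r^2)(1-3r-2r^2)$ at $a=1$, the monotonicity in $a$ (the $a$-derivative equals $-r^2(1-r)-r^2(1+ar)\left[(1+ar)+2r(1+a)\right]<0$, so the minimum is indeed at $a=1$), the admissibility of the Schwarz--Pick substitution since $(1-r^2)/(2r)\ge 1$ for $r\le \sqrt2-1$, and the sharpness computation at $z=-r$ all hold.

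The genuine gap is exactly where you predicted it: the ``decisive ingredient''
\begin{align*}
\sum_{n=2}^{\infty}|a_n|r^n+A(f_0,r)\le \frac{(1-a^2)r^2}{1-r}
\end{align*}
is never proved, and it is the entire mathematical content of Theorem D (it is Lemma A with $N=2$, $t=0$; the paper's own proofs cite it as ``$N=1$'', a misprint). Your sketched route to it does not suffice. Concretely, the constraints you list --- $|a_n|\le 1-a^2$, $\sum_{n\ge1}|a_n|^2\le 1-a^2$, and the Schur relations for $a_1,a_2$ --- admit the data $a=1/\sqrt2$, $|a_1|=\frac12$, $|a_2|=|a_3|=\frac{1}{2\sqrt2}$, $a_n=0$ for $n\ge4$ (take $\omega_1=1$, $\omega_2=0$); at $r=0.28<(\sqrt{17}-3)/4$ the left-hand side is $\approx 0.0554$ while the right-hand side is $\approx 0.0544$, so the estimate fails for this admissible data. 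What saves the lemma for genuine functions is the rigidity of \emph{all} higher coefficients ($|\omega_1|=1$ forces $f$ to be a M\"obius map, whence $|a_3|=a^2(1-a^2)=\frac14$), which your ingredient list does not capture. Moreover, even the sharp bound $|a_n|\le 1-a^2-\frac{|a_1|^2}{1+a}$ for all $n\ge2$ (Ponnusamy--Vijayakumar--Wirths) is insufficient if combined with term-by-term maximization: the $n=1$ quadratic term of $A(f_0,r)$ alone contributes $\frac{(1+ar)|a_1|^2r^2}{(1+a)(1-r)}$, which already exceeds the slack $\frac{|a_1|^2r^2}{(1+a)(1-r)}$ created in the linear sum. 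So the lemma requires the genuinely more delicate bookkeeping of Liu--Liu--Ponnusamy; as it stands, your argument is a correct reduction of Theorem D to that lemma, not a proof of Theorem D.
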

In a recent work, Ponnusamy and Vijayakumar \cite{Ponnusamy-Vijayakumar} presented an extension of Theorem B that applies to harmonic quasiconformal maps, under which Theorem B and Corollary A are obtained as special cases.\vspace{1.2mm}

However, Bohr-Rogosinski inequalities have been the focus of substantial research attention in recent times. Extensive investigations have been carried out on Theorem B and Corollary A to derive their generalized or strengthened versions for different classes of functions. For instance, these results have been established for:\vspace{1.2mm}

$\bullet$ Certain classes of harmonic mappings (see, e.g., \cite{Ahamed-RMJM-2021,Ahamed-AMP-2021,Ahamed-CVEE-2021,Allu-BSM-2021}).

$\bullet$ Classes of functions in one and higher dimensions (see, e.g., \cite{Boas-Khavinson_PAMS-1997,Liu-Liu-Ponnusamy-2021,Liu-Ponnusamy-PAMS-2021}).

$\bullet$ Classes of operator-valued functions in multidimensional settings (see, e.g., \cite{Allu-CMB-2022}).\vspace{1.2mm}

Furthermore, Liu \cite{Liu-JMAA-2021}, and Allu and Arora \cite{Allu-Arora-2022} have recently developed the Bohr-Rogosinski inequality and generalized the notion of the Bohr-Rogosinski phenomenon in terms of Schwarz functions. \vspace{1.2mm}

 Besides the study of sharp refined Bohr inequality, improved versions have also been established in recent years. However, to study the improved Bohr radius, the majorant series in the classical Bohr inequality plays a significant role, along with the quantity $S_r$ (and its integral powers) and its analogues for certain classes of harmonic mappings or operator-valued functions. A number of results in this area have been established in the last couple of years following the publication of the paper by Kayumov and Ponnusamy \cite{Kayumov-CRACAD-2018}. For some other aspects of the improved Bohr inequalities, readers may refer to the articles \cite{Ahamed-CVEE-2021,Allu-CMB-2022} and the references cited therein.
  \subsection{Improved Bohr inequalities for the class $\mathcal{B}$.}
 Let $f$ be holomorphic in $\mathbb{D}$, and for $0<r<1$,  let $\mathbb{D}_r:=\{z\in \mathbb C: |z|<r\}$.
Throughout the paper,  $S_r=S_r(f)$ denotes the planar integral
\begin{align*}
	S_r=\int_{\mathbb D_r} |f'(z)|^2 d A(z).
\end{align*}
Note that if  $f(z)=\sum_{n=0}^\infty a_nz^n$, then $S_r=\pi \sum_{n=1}^\infty n|a_n|^2 r^{2n}.$  If $f$ is a univalent function, then $S_r$ is the area of  $f(\mathbb D_r)$.\vspace{1.2mm}

In $2018$, Kayumov and Ponnusamy \cite{Kayumov-CRACAD-2018} obtained the following result, which is an improved version of the Bohr inequality derived using the sharp bounds of the quantity $S_r$. Let us now recall a couple of recent results for our reference.
\begin{thmE}\cite[Theorem 1]{Kayumov-CRACAD-2018}\label{th-1.12}
	Suppose that  $ f(z)=\sum_{n=0}^{\infty}a_nz^n\in\mathcal{B} .$ Then 
	\begin{enumerate}
		\item[(i)] $ B_0(f,r)+ \frac{16}{9} \left(\frac{S_{r}}{\pi}\right) \leq 1\; \mbox{for}\; r \leq \frac{1}{3}, $
		and the numbers $1/3$, $16/9$ cannot be improved. Moreover, 		
		\item[(ii)] $ |a_{0}|^{2}+B_1(f,r)+ \frac{9}{8} \left(\frac{S_{r}}{\pi}\right) \leq 1\;\mbox{for}\; r \leq \frac{1}{2}, $ and the numbers $1/2$, $9/8$ cannot be improved.
	\end{enumerate}
\end{thmE}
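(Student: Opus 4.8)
The plan is to fix $f(z)=\sum_{n\ge 0}a_nz^n\in\mathcal{B}$, set $a=|a_0|$, and reduce each inequality to a one–variable inequality in $a$. After replacing $f$ by $e^{-i\alpha}f(e^{i\beta}z)$ I may assume $a_0=a\in[0,1)$, which alters none of $|a_n|$, $B_N(f,r)$ or $S_r$. The structural inputs I would use are the classical coefficient bounds for $\mathcal{B}$, namely $|a_n|\le 1-a^2$ for $n\ge 1$ and $\sum_{n\ge1}|a_n|^2\le 1-a^2$, together with the Schwarz–function representation: if $g=(f-a)/(1-af)$ then $g(0)=0$, so $g(z)=z\,\omega(z)$ with $\omega\in\mathcal{B}$, and a short computation gives $f(z)-a=(1-a^2)\,z\,v(z)$ with $v=\omega/(1+az\omega)$. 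Hence $a_n=(1-a^2)[v]_{n-1}$ for $n\ge1$, so that $\sum_{n\ge1}|a_n|r^n=(1-a^2)\,r\,B_0(v,r)$.

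The heart of the matter is a sharp estimate for the Bohr sum. Expanding $v=\sum_{k\ge0}(-a)^k z^k\omega^{k+1}$ and applying the triangle inequality coefficient by coefficient gives $B_0(v,r)\le\sum_{k\ge0}(ar)^k B_0(\omega^{k+1},r)$. Since each $\omega^{k+1}\in\mathcal{B}$, the classical Bohr inequality \eqref{Eq-1.1} yields $B_0(\omega^{k+1},r)\le1$ for $r\le1/3$, whence $B_0(v,r)\le(1-ar)^{-1}$ and therefore
\[ \sum_{n=1}^{\infty}|a_n|r^n\le (1-a^2)\frac{r}{1-ar},\qquad r\le\tfrac13 . \]
This is the estimate that is tight for the Blaschke factor $(a-z)/(1-az)$, and it is the step carrying the whole proof; the naive bound $(1-a^2)r/(1-r)$ from $|a_n|\le1-a^2$ alone is too weak, as it already exhausts the budget $1$ at $r=1/3$. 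For the area term I would use $|a_n|\le1-a^2$ directly, giving $S_r/\pi=\sum_{n\ge1}n|a_n|^2r^{2n}\le(1-a^2)^2\sum_{n\ge1}nr^{2n}=(1-a^2)^2 r^2/(1-r^2)^2$.

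Combining the two estimates reduces (i) to showing
\[ \psi(a,r):=a+(1-a^2)\frac{r}{1-ar}+\frac{16}{9}(1-a^2)^2\frac{r^2}{(1-r^2)^2}\le1 . \]
Both non-constant terms increase in $r$, so it suffices to treat $r=1/3$, where $\psi(a,1/3)=a+\frac{1-a^2}{3-a}+\frac14(1-a^2)^2$. A direct factorisation gives $1-\psi(a,1/3)=(1-a)^2\big[\tfrac{2}{3-a}-\tfrac{(1+a)^2}{4}\big]$, and the bracket is non-negative precisely because $q(a):=(1+a)^2(3-a)$ satisfies $q'(a)=(1+a)(5-3a)>0$ on $[0,1]$ with $q(1)=8$; hence $\psi(a,1/3)\le1$, with equality only at $a=1$. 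For (ii) I would run the same scheme: the term $|a_0|^2$ together with $\sum_{n\ge1}|a_n|r^n\le(1-a^2)r/(1-r)$ already gives $a^2+\sum_{n\ge1}|a_n|r^n\le1$ for $r\le1/2$ (this is the source of the larger radius), and the corresponding sharp Bohr estimate on $(\tfrac13,\tfrac12]$ feeds the analogous one–variable inequality, whose second–order behaviour at $a=1$, $r=1/2$ forces the constant $9/8$.

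For sharpness I would insert $f_a(z)=(a-z)/(1-az)$, for which $|a_n|=(1-a^2)a^{n-1}$, so $B_0(f_a,r)=a+(1-a^2)r/(1-ar)$ and $S_r/\pi=(1-a^2)^2 r^2/(1-a^2r^2)^2$. Writing $a=1-\varepsilon$ and expanding at $r=1/3$, the first–order terms cancel exactly when $r=1/3$, and the second–order terms cancel exactly when the coefficient of the area term equals $16/9$; any larger constant, or any $r>1/3$, makes the expression exceed $1$ for small $\varepsilon$, and the same expansion at $r=1/2$ produces $9/8$ for (ii). The main obstacle is precisely this second–order tightness: since $a+B_0(f,r)+\tfrac{16}{9}S_r/\pi=1+O(\varepsilon^3)$ along the extremal family, every lossy device—term-by-term use of $|a_n|\le1-a^2$ in the Bohr sum, or the Cauchy–Schwarz bound $\sum|a_n|r^n\le(\sum|a_n|^2)^{1/2}(\sum r^{2n})^{1/2}$—overshoots at order $\varepsilon^2$ and breaks the inequality, so the argument hinges entirely on the Blaschke-sharp Bohr estimate above, which itself rests on applying the classical Bohr inequality to the powers $\omega^{k+1}$.
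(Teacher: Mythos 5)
First, a point of order: the paper never proves this statement; it is quoted as Theorem E from Kayumov and Ponnusamy \cite{Kayumov-CRACAD-2018}, and the only related ingredient recorded in the paper is the sharp area bound \eqref{ee-2.1}. So your proposal has to be judged on its own merits rather than against an in-paper proof.

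Your part (i) is correct and complete. The representation $f-a=(1-a^2)zv$ with $v=\omega/(1+az\omega)$, the expansion $v=\sum_{k\ge0}(-a)^kz^k\omega^{k+1}$, and the application of the classical Bohr inequality \eqref{Eq-1.1} to each power $\omega^{k+1}\in\mathcal{B}$ do yield the Möbius-sharp estimate $\sum_{n\ge1}|a_n|r^n\le(1-a^2)r/(1-ar)$ for $r\le1/3$; the closing algebra is right, since $1-\psi(a,1/3)=(1-a)^2\bigl[\tfrac{2}{3-a}-\tfrac{(1+a)^2}{4}\bigr]\ge0$ exactly because $(1+a)^2(3-a)\le 8$ on $[0,1]$; and the sharpness analysis along $f_a$ with $a=1-\varepsilon$ (deficit and area term both of order $\varepsilon^2$, with ratio tending to $1$ precisely at the constant $16/9$) is sound.

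Part (ii), however, contains a genuine gap, and it is exactly where the difficulty of that statement lies. Your key estimate is proved only for $r\le1/3$, and it is genuinely false beyond that radius: for $a_0=0$ it would assert $B_0(\omega,r)\le1$ for every $\omega\in\mathcal{B}$ (write $f=z\omega$), contradicting the sharpness of the Bohr radius for every $r>1/3$. Neither substitute you gesture at can close the argument on $(1/3,1/2]$. The Wiener bound gives $a^2+(1-a^2)r/(1-r)\le1$ for $r\le1/2$ \emph{with equality at} $r=1/2$, so it leaves zero room for the strictly positive term $\tfrac98 S_r/\pi$. The Cauchy--Schwarz bound $\sum_{n\ge1}|a_n|r^n\le\sqrt{1-a^2}\,r/\sqrt{1-r^2}$ fails for large $a$: at $a=0.9$, $r=1/2$ it already gives $0.81+\sqrt{0.19}/\sqrt{3}\approx1.06>1$ before the area term is added. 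What the proof of (ii) actually requires is a two-regime coefficient lemma on $(1/3,1/2]$ --- the Möbius bound $(1-a^2)r/(1-ar)$ valid when $a\ge r$ (a nontrivial result of the type proved in \cite{Ponnusamy-RM-2020}, which does not follow from your power-of-$\omega$ trick since $B_0(\omega^{k+1},r)$ may exceed $1$ there), the Cauchy--Schwarz bound when $a<r$ --- combined with the sharp area bound \eqref{ee-2.1} rather than your weaker $(1-a^2)^2r^2/(1-r^2)^2$, and then a case analysis in $a$ versus $r$. In your proposal this entire mechanism is compressed into the phrase ``the corresponding sharp Bohr estimate on $(1/3,1/2]$'', which is never stated or derived; as it stands, part (ii) is not proved.
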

In $ 2020 $, Ismagilov \emph{et al.} \cite{Ismagilov-2020-JMAA} continued  investigating on Theorem \ref{th-1.12} further and proved the following sharp results in view of addition of two degree polynomial expression of the quantity $ S_r/\pi $ with some suitable coefficients.
\begin{thmF}\cite[Theorem 1, Theorem 2]{Ismagilov-2020-JMAA} \label{th-1.10}
	Suppose that  $ f(z)=\sum_{n=0}^{\infty}a_nz^n\in\mathcal{B} $. Then 
	\begin{enumerate}
		\item[(i)] $B_0(f,r)+\frac{16}{9}\left(\frac{S_r}{\pi}\right)+\lambda_1\left(\frac{S_r}{\pi}\right)^2\leq 1\;\; \mbox{for}\;\; r\leq\frac{1}{3},$
	where $\lambda_1$ is given by $\lambda_1=\frac{4(486-261a-324a^2+2a^3+30a^4+3a^5)}{81(1+a)^3(3-5a)}=18.6095...$
	and $ a\approx 0.567284 $, is the unique root of the equation $ -405+473t+402t^2+38t^3+3t^4+t^5=0 $ in the interval $ (0,1) .$ Moreover,
	\item[(ii)] $|f(z)|^2+B_1(f,r)+\frac{16}{9}\left(\frac{S_r}{\pi}\right)+\lambda_2\left(\frac{S_r}{\pi}\right)^2\leq 1\;\; \mbox{for}\;\; r\leq\frac{1}{3},$
	where $\lambda_2=\frac{-81+1044a+54a^2-116a^3-5a^4}{162(a+1)^2(2a-1)}=16.4618..$
	and $ a\approx 0.537869 $, is the unique root of the equation $ -513+910t+80t^2+2t^3+t^4=0 $	in the interval $ (0,1) $.
\end{enumerate}
	 The equalities are achieved for the function
	\begin{align}\label{Eq-2.2}
		f_a(z) =\frac{a-z}{1-az},\;\; a\in[0,1).
	\end{align}
\end{thmF}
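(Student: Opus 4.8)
The plan is to reduce both inequalities to a single one-variable inequality in $a:=|a_0|\in[0,1)$ and then to pin down the sharp constants $\lambda_1,\lambda_2$ by a tangency (double-root) condition. Since $B_0(f,r)$, $B_1(f,r)$ and $S_r=\pi\sum_{n\ge1}n|a_n|^2r^{2n}$ depend only on the moduli $|a_n|$, and $\max_{|z|=r}|f(z)|$ is rotation invariant, I may replace $f$ by a suitable rotation $e^{-i\varphi}f(e^{i\theta}z)$ and assume $a_0=a\in[0,1)$ (the case $a=1$ forcing $f$ to be a unimodular constant, which is trivial). Moreover every term on the left-hand side of (i), and of (ii) after passing to the maximum over $|z|=r$, is nondecreasing in $r$ (the coefficients are fixed, $S_r$ grows with $r$, and $\max_{|z|=r}|f|$ grows with $r$ by the maximum principle); hence it suffices to prove both inequalities at the endpoint $r=1/3$.

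The analytic core consists of two sharp estimates, both attaining equality for $f_a$: the majorant bound $\sum_{n\ge1}|a_n|r^n\le (1-a^2)r/(1-ar)$ and the area bound $S_r/\pi\le (1-a^2)^2r^2/(1-a^2r^2)^2$; for (ii) I additionally use the Schwarz--Pick bound $\max_{|z|=r}|f(z)|\le (a+r)/(1+ar)$, again an equality for $f_a$. Writing $s(a):=\frac{9(1-a^2)^2}{(9-a^2)^2}$ for the value of $S_r/\pi$ at $f_a$ when $r=1/3$, and using that the functional is increasing in the two nonnegative quantities $\sum_{n\ge1}|a_n|r^n$ and $S_r/\pi$, these estimates give, at $r=1/3$,
\[
B_0(f,r)+\tfrac{16}{9}\tfrac{S_r}{\pi}+\lambda_1\Big(\tfrac{S_r}{\pi}\Big)^2\le Q_1(a):=a+\frac{1-a^2}{3-a}+\tfrac{16}{9}\,s(a)+\lambda_1\,s(a)^2 .
\]
Bounding $|f(z)|^2$ separately by Schwarz--Pick shows that the left-hand side of (ii) is $\le Q_2(a):=\big(\tfrac{3a+1}{3+a}\big)^2+\frac{1-a^2}{3-a}+\tfrac{16}{9}s(a)+\lambda_2\,s(a)^2$. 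The problem is thus reduced to proving $Q_1(a)\le1$ and $Q_2(a)\le1$ on $[0,1)$.

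To obtain these with the stated, optimal constants, I would analyse $Q_1$ near $a=1$ and in the interior. As $a\to1^-$ one has $1-a^2=O(1-a)$ and $s(a)=O((1-a)^2)$, so the quartic term $\lambda_1 s(a)^2=O((1-a)^4)$ is negligible; a second-order expansion gives $a+\frac{1-a^2}{3-a}+\frac{16}{9}s(a)=1-(1-a)^3+O((1-a)^4)$, whence $Q_1(1^-)=1$ with approach from below, and one sees that $16/9$ is exactly the largest linear coefficient for which this holds. Consequently the binding constraint on $\lambda_1$ occurs at an interior maximum $a_\ast\in(0,1)$ characterised by $Q_1(a_\ast)=1$ and $Q_1'(a_\ast)=0$. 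Clearing the denominators $(3-a)$ and $(9-a^2)$ in this pair and eliminating $\lambda_1$ yields precisely the quintic $-405+473t+402t^2+38t^3+3t^4+t^5=0$ for $t=a_\ast$, and back-substitution gives the closed form for $\lambda_1$. The identical scheme for (ii), now with first term $\big(\tfrac{3a+1}{3+a}\big)^2$, produces the quartic $-513+910t+80t^2+2t^3+t^4=0$ and the value $\lambda_2$. Sharpness of all constants then follows because every estimate used is an equality for $f_a$: taking $f=f_{a_\ast}$ (with $z$ on $|z|=r$ chosen to realise $\max|f|$ in (ii)) gives equality at $r=1/3$, while letting $a\to1^-$ shows $16/9$ cannot be enlarged.

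The principal obstacle is the sharp area estimate for $a>0$. Setting $\omega=(f-a)/(1-af)\in\mathcal{B}$ with $\omega(0)=0$ gives $f'=(1-a^2)\omega'/(1+a\omega)^2$, so $S_r/\pi\le (1-a^2)^2r^2/(1-a^2r^2)^2$ is equivalent to the assertion that $\omega(z)=-z$ maximises $\frac1\pi\int_{\mathbb{D}_r}|\omega'|^2/|1+a\omega|^4\,dA$ over all such $\omega$ when $r\le1/3$. The naive coefficientwise relaxation (bounding $|a_n|\le1-a^2$ and $\sum|a_n|^2\le1-a^2$ separately) is too weak, since it admits coefficient vectors realised by no member of $\mathcal{B}$; the correct bound rests on the rigidity of $\mathcal{B}$, namely that equality in $|a_1|\le 1-a^2$ already forces $f=f_a$. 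I expect this estimate (together with the analogous sharpening of the majorant bound) to be the hard part, whereas the remaining step -- verifying that $Q_1\le1$ (respectively $Q_2\le1$) holds on all of $[0,1)$, not merely at the tangency point, and that $a_\ast$ is the unique root of the quintic (respectively quartic) in $(0,1)$ -- reduces, once the tangency value of $\lambda$ is inserted, to a routine sign analysis of a fixed univariate polynomial.
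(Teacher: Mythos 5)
A preliminary remark: the paper you were compared against does not prove this statement at all. Theorem F is quoted as background from Ismagilov, Kayumov and Ponnusamy \cite{Ismagilov-2020-JMAA}, so the only meaningful comparison is with the method of that cited work, which is also the template for the paper's own proofs of Theorems \ref{th-3.2} and \ref{th-3.3}. Your architecture is exactly that template: normalize $a_0=a\in[0,1)$ by rotation, reduce to $r=1/3$ by monotonicity in $r$, insert sharp bounds (each an equality for $f_a$) for the majorant series, for $S_r/\pi$, and in part (ii) for $|f(z)|$ via Schwarz--Pick, and then pin down $\lambda$ by the tangency system $Q(a_*)=1$, $Q'(a_*)=0$, with sharpness read off from $f_{a_*}$. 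Your reductions are sound, and they do reproduce the stated data: the tangency value of $\lambda_1$ computed from your $Q_1$ at $a_*\approx 0.567284$ is $18.609\dots$, the value of $\lambda_2$ from $Q_2$ at $a_{**}\approx 0.537869$ is $16.461\dots$, and these points are indeed roots of the stated quintic and quartic. Your diagnosis that the naive Wiener relaxation $|a_n|\le 1-a^2$ cannot work is also correct and is the key structural point: it would give $a+(1-a^2)/2+16(1-a^2)^2/(9-a^2)^2\approx 1.004>1$ near $a\approx 0.567$, so no choice of $\lambda_1\ge 0$ could survive that route.

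The genuine gap is that the two inequalities carrying all of the analytic content are assumed rather than proved, and you flag them yourself as ``the hard part.'' First, the area bound $S_r/\pi\le r^2(1-a^2)^2/(1-a^2r^2)^2$ is not open: it is precisely inequality \eqref{ee-2.1} of the paper, due to Kayumov and Ponnusamy, valid for $0<r\le 1/\sqrt{2}$ and hence on $[0,1/3]$; your sketch via $\omega=(f-a)/(1-af)$ stops well short of a proof (controlling a Dirichlet-type integral under subordination is exactly the nontrivial step, and is where the restriction $r\le 1/\sqrt{2}$ enters), so for a complete argument you must either invoke this result or reprove it. Second, and more seriously, the refined majorant bound $\sum_{n\ge 1}|a_n|r^n\le (1-a^2)r/(1-ar)$, on which both $Q_1$ and $Q_2$ rest, is itself a genuine theorem --- the coefficient inequality of Ponnusamy, Vijayakumar and Wirths \cite{Ponnusamy-RM-2020}, valid for $r\le 1/(1+2a)$, which covers $r\le 1/3$ --- and your proposal contains neither a proof of it nor the recognition that it exists in the literature; as you rightly observe, it does not follow from termwise coefficient bounds. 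So the verdict is: correct skeleton, correct constants, correct identification of where the difficulty sits, but as a self-contained proof it is incomplete exactly at the two sharp lemmas; once those are filled in by the citations above, what remains of your argument is, in substance, the proof given in \cite{Ismagilov-2020-JMAA}.
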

Subsequently, Huang \textit{et al.} \cite{Huang-Liu-Ponnu-CVEE-2021} presented a further improvement of the inequality (i) of Theorem F by considering the addition of the square of $S_r/\pi$, which yielded the following sharp inequality.
\begin{align*}
	B_0(f,r)+\left(\frac{1}{1+|a_0|}+\frac{r}{1-r}\right){||f_0||}^2+\frac{8}{9}\left(\frac{S_r}{\pi}\right)+\lambda\left(\dfrac{S_r}{\pi}\right)^2\leq 1 \;\; \mbox{for}\;\; r\leq\dfrac{1}{3},
\end{align*} which is an  improved version of the classical Bohr inequality,	where $ \lambda=14.796883... $, and showed that the equality  is achieved for the function $f_a$  defined by \eqref{Eq-2.2}. The authors \cite{Huang-Liu-Ponnu-CVEE-2021} have also established the following sharp inequality
\begin{align*}
	|a_0|^2+B_1(f,r)+\left(\frac{1}{1+|a_0|}+\frac{r}{1-r}\right){||f_0||}^2+\frac{9}{8}\left(\frac{S_r}{\pi}\right)+\lambda\left(\dfrac{S_r}{\pi}\right)^2\leq 1
\end{align*}
for $ r\leq{1}/(3-|a_0|) $,	where $ \lambda=13.966088... $ and proved that the equality can be achieved for the function $ f_a$ defined by \eqref{Eq-2.2}.\vspace{1.2mm}
We now turn our attention to a different setting of the quantity $ S_r $. Recently, Ismagilov \emph{et al.} \cite{Ismagilov-2021-JMS} observed the following fact
\begin{align}\label{e-11.18}
	\dfrac{S_r}{\pi-S_r}\leq \dfrac{r^2(1-|a_0|^2)^2}{(1-r^2)(1-r^2|a_0|^4)}.
\end{align}
It will be an interesting question to find sharp improved Bohr-type inequalities or Bohr-Rogosinski inequalities in terms of the addition of the quantity $ S_r/(\pi-S_r) $. However, with the help of this new setting, Ismagilov \emph{et al.} \cite{Ismagilov-2021-JMS} initiated the study of improved Bohr-type inequalities by reformulating the inequality in Theorem \ref{th-3.2} replaced the quantity $S_r/\pi$ by  $S_r/(\pi-S_r)$ and obtained the following sharp result.
\begin{thmG} \label{th-1.18} \cite{Ismagilov-2021-JMS}
	Suppose that $ f(z)=\sum_{n=0}^{\infty}a_nz^n\in\mathcal{B} $. Then 
	\begin{align*}
		B_0(f,r)+ \frac{16}{9} \left(\frac{S_{r}}{\pi-S_r}\right) \leq 1 \quad \mbox{for} \quad r \leq \frac{1}{3},
	\end{align*}
	and the number $16/9$ cannot be improved. Moreover, 
	\begin{align*}
		|a_{0}|^{2}+B_1(f,r)+ \frac{9}{8} \left(\frac{S_{r}}{\pi-S_r}\right) \leq 1 \quad \mbox{for} \quad r \leq \frac{1}{2},
	\end{align*}
	and the number $9/8$ cannot be improved.
\end{thmG}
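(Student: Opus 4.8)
The plan is to show that, among all $f\in\mathcal{B}$ with $|a_0|=a$ fixed, the Blaschke factor $f_a$ of \eqref{Eq-2.2} is \emph{simultaneously} extremal for both summands on the left-hand side, so that the problem collapses to a one-variable inequality in $a$. For the area term this is exactly the content of \eqref{e-11.18}: a direct computation with $f_a$, whose coefficients are $a_n=-(1-a^2)a^{n-1}$, gives
\begin{align*}
\frac{S_r}{\pi-S_r}\bigg|_{f=f_a}=\frac{r^2(1-a^2)^2}{(1-r^2)(1-r^2a^4)},
\end{align*}
so \eqref{e-11.18} says precisely that $f_a$ maximizes $S_r/(\pi-S_r)$ over the subclass $\{|a_0|=a\}$. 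For the Bohr/Rogosinski part I would invoke the sharp coefficient estimate $\sum_{n=1}^{\infty}|a_n|r^n\le (1-a^2)r/(1-ar)$, whose right-hand side is again attained by $f_a$. Combining the two bounds reduces (i) to proving $\Phi_r(a):=a+\frac{(1-a^2)r}{1-ar}+\frac{16}{9}\frac{r^2(1-a^2)^2}{(1-r^2)(1-r^2a^4)}\le 1$, and (ii) to the analogous statement with the first term replaced by $a^2$ and the constant $16/9$ by $9/8$.

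Next, since for a fixed function every summand is an increasing function of $r$, the quantity $\Phi_r(a)$ (being the left-hand side evaluated at $f_a$) is increasing in $r$; hence it suffices to check the inequality at the endpoints $r=1/3$ for (i) and $r=1/2$ for (ii). At $r=1/3$ the expression simplifies to $\Phi_{1/3}(a)=a+\frac{1-a^2}{3-a}+\frac{2(1-a^2)^2}{9-a^4}$, and I would verify $1-\Phi_{1/3}(a)\ge 0$ by pulling out the common factor $(1-a)$ repeatedly, which leaves
\begin{align*}
1-\Phi_{1/3}(a)=\frac{2(1-a)^3(a^3+a+6)}{(3-a)(9-a^4)},
\end{align*}
manifestly nonnegative on $[0,1]$. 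The same scheme handles (ii): at $r=1/2$ one reduces to $\Psi_{1/2}(a)=a^2+\frac{1-a^2}{2-a}+\frac{3(1-a^2)^2}{2(4-a^4)}$ and factors $1-\Psi_{1/2}(a)$ as a positive multiple of $(1-a)^3$ times a polynomial that stays positive on $[0,1]$.

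For sharpness I would substitute $f_a$ and expand near the extremal case $a\to 1^{-}$. Writing $a=1-\varepsilon$, at $r=1/3$ one finds $1-B_0(f_a,1/3)=\frac{2(1-a)^2}{3-a}\sim\varepsilon^2$ while $\frac{S_r}{\pi-S_r}\big|_{f_a}\sim\frac{9}{16}\varepsilon^2$, so on replacing $16/9$ by a constant $c$ the left side behaves like $1+\varepsilon^2\big(\frac{9c}{16}-1\big)$, which is $\le 1$ for all small $\varepsilon$ only if $c\le 16/9$. The corresponding computation at $r=1/2$ gives $1-(a^2+B_1)\sim 2\varepsilon^2$ and area term $\sim\frac{16}{9}\varepsilon^2$, forcing $c\le 9/8$. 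That the radii $1/3$ and $1/2$ are themselves sharp already follows from the first two terms, since $a+\frac{(1-a^2)r}{1-ar}=1+\varepsilon\frac{3r-1}{1-r}+O(\varepsilon^2)$ and $a^2+\frac{(1-a^2)r}{1-ar}=1+2\varepsilon\frac{2r-1}{1-r}+O(\varepsilon^2)$, both exceeding $1$ once $r>1/3$, respectively $r>1/2$.

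The main obstacle is the justification of the sharp first-order coefficient estimate $\sum_{n\ge 1}|a_n|r^n\le (1-a^2)r/(1-ar)$ together with the fact that $f_a$ is the simultaneous extremizer for both summands: the weaker textbook bound $|a_n|\le 1-a^2$ (yielding $\sum|a_n|r^n\le (1-a^2)r/(1-r)$) decouples the two pieces and is too lossy — already at $a=0.9$, $r=1/3$ it overshoots $1$ — so the argument genuinely hinges on having the estimate in its $f_a$-sharp form, compatible with \eqref{e-11.18}. Once that is in place, the only remaining work is the algebraic verification of the two factorizations, which is routine after the correct powers of $(1-a)$ are extracted.
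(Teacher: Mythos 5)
Your overall architecture (reduce everything to the Blaschke factor $f_a$, use monotonicity in $r$, verify a one-variable inequality at the endpoint, read off sharpness from $a\to 1^-$) is the standard one in this circle of papers, and several of your computations are correct: the identity $\frac{S_r}{\pi-S_r}\big|_{f_a}=\frac{r^2(1-a^2)^2}{(1-r^2)(1-r^2a^4)}$, the factorization $1-\Phi_{1/3}(a)=\frac{2(1-a)^3(a^3+a+6)}{(3-a)(9-a^4)}$, and both asymptotic sharpness estimates all check out. (The paper itself gives no proof of Theorem G; it is quoted from Ismagilov \emph{et al.}, so the comparison here is with the method of the cited source.) But there is a genuine gap, and it sits exactly where you flagged ``the main obstacle'': the estimate $\sum_{n\geq 1}|a_n|r^n\leq \frac{(1-a^2)r}{1-ar}$ is \emph{false} in the generality you need. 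At $a=0$ it asserts $\sum_{n\geq1}|a_n|r^n\leq r$ for every $f\in\mathcal{B}$ with $f(0)=0$; writing $f(z)=zg(z)$ with $g\in\mathcal{B}$, this is precisely Bohr's inequality for $g$, which fails for every $r>1/3$. Concretely, $f(z)=z\,\frac{z+3/5}{1+(3/5)z}$ satisfies $\sum_{n\geq1}|a_n|(1/2)^n=\frac{3}{10}+\frac{(16/25)(1/4)}{7/10}=\frac{37}{70}>\frac12$. Hence $f_a$ is \emph{not} a simultaneous extremizer of both summands in the range $1/3<r\leq 1/2$, and your reduction for part (ii) (the $|a_0|^2$, $9/8$, $r\leq 1/2$ statement) collapses: the one-variable inequality $\Psi_{1/2}(a)\leq 1$ you verify is true, but it is not an upper bound for the left-hand side, so nothing follows from it.

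Part (i) is salvageable because there the estimate is only invoked for $r\leq 1/3$, where it \emph{is} true: writing $f=\frac{a_0-\omega}{1-\bar a_0\omega}$ with $\omega$ a Schwarz function and expanding $f=a_0-(1-|a_0|^2)\sum_{k\geq1}\bar a_0^{\,k-1}\omega^k$, submultiplicativity of majorant series plus Bohr's inequality applied to $\omega(z)/z\in\mathcal{B}$ gives $M_\omega(r)\leq r$ and hence $\sum_{n\geq1}|a_n|r^n\leq\frac{(1-a^2)r}{1-ar}$ for $r\leq 1/3$. You must supply such an argument, since (as your own $a=0.9$ computation shows) the whole proof hinges on it and it is not a textbook fact. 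For part (ii), the proofs in the literature do not use your bound; they use a case-split lemma, roughly $\sum_{n\geq1}|a_n|r^n\leq\frac{(1-a^2)r}{1-ar}$ when $a\geq r$ and $\sum_{n\geq1}|a_n|r^n\leq\frac{(1-a^2)r}{\sqrt{1-r^2}}$ when $a<r$, followed by two separate one-variable verifications. Note also that the unrestricted Cauchy--Schwarz-type bound $\frac{(1-a^2)r}{\sqrt{(1-r)(1-a^2r)}}$ cannot replace the sharp form in part (ii): its deficit against the $f_a$-value is of order $2(1-a)^3$ as $a\to1$ at $r=1/2$, while the surviving margin $1-\Psi_{1/2}(a)=\frac{(1-a)^3(1+a)(2a^3+2a^2-a+2)}{2(2-a)(4-a^4)}$ is only $\tfrac53(1-a)^3$, so the case distinction is genuinely necessary, not a technicality.
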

Inspired by Theorem F in view of Theorem G , we continue the study with the quantity $S_r/(\pi-S_r)$ further and proved the following sharp result.

\begin{prob}\label{Q-2.2}
	Whether we can derive sharp version of Theorem D in the setting of Theorem F and Theorem G with the additional two non-negative term $S_r/(\pi-S_r)$ instead of $S_r/\pi$ and without decreasing the radius?
\end{prob}
We observe that while there are numerous sharp results concerning improved Bohr inequalities for the class $\mathcal{B}$, the literature contains only a very few results on improved Bohr-Rogosinski inequalities (see \cite{Liu-Liu-Ponnusamy-2021}). This gap motivates us to propose the following problem for further study.
\begin{prob}\label{P-1}
	Can we establish an improved Bohr-Rogosinski inequality for the class $\mathcal{B}$?
\end{prob}
In view of Theorems D, F and G, it is natural to raise the following.
\begin{prob}\label{Q-2.1}
	Can we derive a sharp version of Theorem D within the framework of Theorem F and Theorem G, while incorporating the two additional non-negative terms and maintaining the original radius?
\end{prob}
The main objective of this paper is to establish an improved version of the Bohr-Rogosinski inequalities, thus addressing Problem \ref{Q-2.2}, Problem \ref{P-1} and Problem \ref{Q-2.1}. The paper is organized as follows. Section 2 presents our main conclusions. Theorems \ref{th-3.2} and \ref{th-3.3} provide successful answers to Problem \ref{Q-2.1} and Problem \ref{Q-2.2}, which relate to the definition of the Bohr-type operator applicable to analytic functions and their sections.
\section{\bf Improved Bohr-Rogosinski inequalities for the class $ \mathcal{B} $}
In the study of Bohr's inequality, it is well-known that the sharp bounds of the coefficients $ a_n $ in the majorant series and in case of Rogosinski inequality, the sharp bounds of $ |f(z)| $ have the key roles. However,  the sharp bounds of the quantity $ S_r $ for the class of functions $f\in \mathcal{B} $  is established by Kayumov and Ponnusamy \cite{Ponnusamy-2017} which is
\begin{align}\label{ee-2.1}
	\frac{S_r}{\pi}=\sum_{n=1}^{\infty}n|a_n|^2r^{2n}\leq \frac{r^2(1-|a_0|^2)^2}{(1-|a_0|^2r^2)^2} \,\,\,\,\mbox{for}\,\,\,\, 0<r\leq1/\sqrt{2}.
\end{align}
In Theorem \ref{th-3.2}, we present an affirmative answer to this Problem \ref{Q-2.1}. 
\begin{thm}\label{th-3.2}
		Suppose that $ f(z)=\sum_{n=0}^{\infty}a_nz^n\in\mathcal{B} $. Then 
	\begin{align*}
		\mathcal{A}_{f}(r):=|f(z)|+|f^{\prime}(z)|r+B_2(f,r)+A(f_0,r)+\frac{221-43\sqrt{17}}{64}\left(\frac{S_r}{\pi}\right)+\lambda\left(\dfrac{S_r}{\pi}\right)^2\leq 1
	\end{align*}
	where 
	\begin{equation*}
		\lambda=\dfrac{A_1(a_*)}{128(1+a_*)^3A_2(a_*)}=18.0215\dots
	\end{equation*}where
	\begin{align*}
		A_1(a_*)=&128(-6311+1521\sqrt{17})+32(-459367+111409\sqrt{17})a\\&\quad+48(368037-89251\sqrt{17})a^2+16(6825181-1655339\sqrt{17})a^3\\&\quad+1280(-44509+10795\sqrt{17})a^4+6(-46228497+11212055\sqrt{17})a^5\\&\quad+7(-2654823+643889\sqrt{17})a^6+32(7818303-1896217\sqrt{17})a^7\\&\quad+36(4165553-1010295\sqrt{17})a^8
	\end{align*}and
	\begin{align*}
		A_2(a_*)=&(2008-488\sqrt{17})+(33442-8110\sqrt{17})a+3(-36847+8937\sqrt{17})a^2\\&\quad+(79457-1927\sqrt{17})a^3
	\end{align*}
	and $a_*\approx0.600976$ is the unique root in the interval $(0,1)$ of the equation  $\Psi_1^*(t)=0$, where 
	\begin{align}\label{eq-2.2}
		\Psi_1^*(t)&:=2048(-169479+41105\sqrt{17})+64(-74661865+18108159\sqrt{17})t\\&-64(-470704167+114162545\sqrt{17})t^2+\nonumber16(-698169901+169331035\sqrt{17})t^3\\&+16(-9949839967+2413190649\sqrt{17})t^4-172(-914721543+221852561\sqrt{17})t^5\nonumber\\&+(298599878708-72421108204\sqrt{17})t^6+(-277864274087+67391985393\sqrt{17})t^7\nonumber\\&+(-376869563493+91404295139\sqrt{17})t^8\nonumber-4(-40230249983+9757268825\sqrt{17})t^9\\&+(363756131186-88223820638\sqrt{17})t^{10}+2(-35641396573+8644308395\sqrt{17})t^{11}\nonumber\\&+(-94566008398+22935625954\sqrt{17})t^{12}\nonumber.
	\end{align} 
	The equality  is achieved for the function 
	$f_a$ defined by \eqref{Eq-2.2}.
\end{thm}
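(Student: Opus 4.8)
The plan is to dominate $\mathcal{A}_f(r)$ by an explicit function of the single parameter $a:=|a_0|\in[0,1)$, using pointwise estimates that are sharp on the family $f_a$, and then to reduce the claim to a one-variable inequality. First I would control the Rogosinski part by the Schwarz--Pick lemma: for $|z|=r$ one has $|f(z)|\le (a+r)/(1+ar)$ and $|f'(z)|\le (1-|f(z)|^2)/(1-r^2)$. Since $r_0:=(\sqrt{17}-3)/4<\sqrt{2}-1$, the map $w\mapsto w+r(1-w^2)/(1-r^2)$ is increasing on $[0,1]$ for $r\le r_0$, so inserting the bound for $|f(z)|$ yields
\begin{equation*}
|f(z)|+|f'(z)|\,r\le \frac{a+r}{1+ar}+\frac{r(1-a^2)}{(1+ar)^2}.
\end{equation*}
The remaining terms are handled by standard estimates: the coefficient inequality $|a_n|\le 1-a^2$ majorizes $B_2(f,r)$, the sharp estimate $\sum_{n\ge 1}|a_n|^2r^{2n}\le (1-a^2)^2r^2/(1-a^2r^2)$ (which is an equality for $f_a$) controls $A(f_0,r)$, and the sharp area bound \eqref{ee-2.1} controls $S_r/\pi$ and hence its square. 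Collecting these yields $\mathcal{A}_f(r)\le \Phi(a,r)$ with $\Phi$ completely explicit.

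Next I would eliminate $r$. Each summand of $\Phi(a,r)$ is an increasing function of $r$ on $[0,r_0]$: indeed $\tfrac{d}{dr}\,(a+r)/(1+ar)=(1-a^2)/(1+ar)^2>0$, $\tfrac{d}{dr}\,r(1-a^2)/(1+ar)^2=(1-a^2)(1-ar)/(1+ar)^3>0$, and the $B_2$-, $A$-, and $S_r/\pi$-majorants are manifestly increasing. Hence $\Phi(a,r)\le\Phi(a,r_0)$ for $r\le r_0$, and it suffices to prove $\psi(a):=\Phi(a,r_0)\le1$ on $[0,1)$, where $r_0=(\sqrt{17}-3)/4$ is the radius inherited from Theorem D. Expanding $\psi$ near $a=1$ shows $\psi(a)\to1$, with the coefficient of $(1-a)$ vanishing precisely when $2r_0^2+3r_0-1=0$; this factorisation is exactly why the Theorem~D radius is preserved.

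With $r_0$ fixed, the two constants are pinned by two tangency conditions. The coefficient $\tfrac{221-43\sqrt{17}}{64}$ of $S_r/\pi$ is forced by requiring the second-order coefficient of $\psi$ at $a=1$ to vanish, so that $\psi$ approaches $1$ tangentially from below as $a\to1^-$; its $\sqrt{17}$-dependence is what the substitution $r=r_0$ produces. The constant $\lambda$ is then the largest value keeping $\psi\le1$ on the interior, namely $\lambda=\min_{a}\big(1-\psi_0(a)\big)/(S_{r_0}/\pi)^2$, where $\psi_0$ denotes $\psi$ without the $\lambda$-term. Setting the derivative of this ratio to zero and clearing the positive denominators (powers of $1+a$ and $1-a^2r_0^2$) produces exactly the degree-twelve equation $\Psi_1^*(a)=0$; its unique root $a_*\approx0.600976$ in $(0,1)$ is the interior maximiser, and substituting it back gives $\lambda=A_1(a_*)/\big(128(1+a_*)^3A_2(a_*)\big)=18.0215\ldots$.

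The principal obstacle is the concluding global step: certifying that $a_*$ furnishes the \emph{global} maximum of $\psi$ on $[0,1)$ and that $1-\psi(a)\ge0$ throughout. After clearing denominators this becomes a polynomial sign condition whose critical points are the zeros of $\Psi_1^*$, with coefficients in $\mathbb{Z}[\sqrt{17}]$; isolating $a_*$, verifying it is a maximum rather than another critical point, and controlling the sign of $1-\psi$ elsewhere (together with the tangential behaviour as $a\to1^-$) is a delicate estimate that I expect to require symbolic computation and careful root separation, and it is where essentially all the work lies. Once this is settled, sharpness follows: the Schwarz--Pick bound, the estimate on $\sum|a_n|^2r^{2n}$, and the area bound \eqref{ee-2.1} are all equalities for $f_a$, so at $a=a_*$ (with $z=-r_0$) the majorant is attained and neither $\lambda$ nor the radius $r_0$ can be enlarged within this scheme, which is the asserted extremality of $f_a$.
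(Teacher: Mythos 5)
Your high-level architecture (Schwarz--Pick for $|f(z)|+|f'(z)|r$, monotonicity in $r$ to reduce to $r_0=(\sqrt{17}-3)/4$, then a one-variable tangency analysis producing $\Psi_1^*$ and $\lambda$) is the same as the paper's, but the step where you majorize $B_2(f,r)$ and $A(f_0,r)$ \emph{separately} is a genuine gap that breaks the proof. The paper does not use Wiener's inequality $|a_n|\le 1-a^2$ at all; it invokes the refined Bohr inequality of Liu--Liu--Ponnusamy (its Lemma A, applied with $N=2$, so $t=0$), which bounds the sum
\[
B_2(f,r)+\left(\frac{1}{1+|a_0|}+\frac{r}{1-r}\right)\sum_{n=1}^{\infty}|a_n|^2r^{2n}\le\frac{(1-|a_0|^2)r^2}{1-r},
\]
i.e.\ the refinement term $A(f_0,r)$ is absorbed for free: the right-hand side is exactly what Wiener's inequality would give for $B_2(f,r)$ alone, and this joint bound is an equality for $f_a$. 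Your term-by-term estimate instead gives $\frac{(1-a^2)r^2}{1-r}$ \emph{plus} the strictly positive quantity $\left(\frac{1}{1+a}+\frac{r}{1-r}\right)\frac{(1-a^2)^2r^2}{1-a^2r^2}$. This slack is fatal because the target inequality is tight at an interior point: with the stated $\lambda$, the paper's majorant equals $1$ exactly at $a=a_*$ (this is the content of $A_3(a_*)=A_3'(a_*)=0$, and it is forced, since $f_{a_*}$ attains equality at $r=r_0$). Numerically, with $a_*\approx 0.600976$ and $r_0\approx 0.28078$, your extra term is about $0.034$, so your $\psi(a_*)\approx 1.034>1$; in fact your majorant still exceeds $1$ at $a_*$ even if one sets $\lambda=0$. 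So the one-variable inequality $\psi\le 1$ that you propose to verify is simply false, and no amount of root isolation or symbolic computation in the final step can rescue it.

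The same loss also invalidates your closing sharpness argument: you claim all your estimates are equalities for $f_a$, but Wiener's bound is strict for $f_a$ when $n\ge2$ and $a\in(0,1)$, since $|a_n|=(1-a^2)a^{n-1}<1-a^2$. (For $f_a$ one has $B_2(f_a,r)=\frac{a(1-a^2)r^2}{1-ar}$ and $A(f_0,r)=\frac{(1-a)(1-a^2)r^2}{(1-r)(1-ar)}$, whose sum is $\frac{(1-a^2)r^2}{1-r}$ --- equality in the joint lemma, not in your separate bounds.) If you replace your two separate estimates by this joint lemma, your outline does reduce to the paper's one-variable problem: the majorant becomes $1-c(a)(1-a)^3A_3(a)$ with $c(a)>0$, and the analysis of $A_3$ (unique stationary point $a_*$ determined by $A_3'(a)=0$, which yields $\lambda=A_1(a_*)/(128(1+a_*)^3A_2(a_*))$; $A_3(a_*)=0$ via $\Psi_1^*$; positivity at the endpoints) gives the theorem, with sharpness read off from $f_{a_*}$ exactly as you intend.
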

\begin{thm}\label{th-3.3}
		Suppose that $ f(z)=\sum_{n=0}^{\infty}a_nz^n\in\mathcal{B} $. Then 
	\begin{align*}
		\mathcal{J}_{f}(r)&:=|f(z)|+|f^\prime(z)|r+B_2(f,r)+A(f_0,r)+\frac{(221-43\sqrt{17})}{64}\left(\frac{S_r}{\pi-S_r}\right)\\&\quad+\mu\left(\frac{S_r}{\pi-S_r}\right)^2\leq 1 \;\; \mbox{for}\;\; r\leq\dfrac{\sqrt{17}-3}{4},
	\end{align*}
	where 
	\begin{equation*}
		\mu=\dfrac{E_1(a_{**})}{128(1+a_{**})^3E_2(a_{**})}=16.0824\dots
	\end{equation*}where
	\begin{align*}
		E_1(a_{**})=&(321568-77024\sqrt{17})+8(-1166421+282995\sqrt{17})a_{**}\\&+(2287452-554244\sqrt{17})a_{**}^2+16(-974531+236341\sqrt{17})a_{**}^3\\&-40(-181177+43951\sqrt{17})a_{**}^4+(88788654-21534594\sqrt{17})a_{**}^5\\&+7(-2416805+586147\sqrt{17})a_{**}^6+32(-3664887+888865\sqrt{17})a_{**}^7\\&\quad+36(-1952633+473583\sqrt{17})a_{**}^8	\end{align*}and
		\begin{align*}
			E_2(a_{**})=&8(-251+61\sqrt{17})+(-33442+8110\sqrt{17})a_{**}-3(-36847+8937\sqrt{17})a_{**}^2\\&\quad+7(-11351+2753\sqrt{17})a_{**}^3
		\end{align*}
	and $a_{**}\approx 0.565671$ is the unique  root in  $(0,1)$ of the equation  $\Psi^*_2(t)=0$, where
	\begin{align}\label{e-3.2}
	\Psi^*_2(t)&:=128(-12782703+3100265\sqrt{17})+64(-207084487+50225361\sqrt{17})t\\&\quad-32(-129741581\nonumber+314668555\sqrt{17})t^2+(32440299472-7867928752\sqrt{17})t^3\\&\quad+16(-10605671261\nonumber+2572253099\sqrt{17})t^4+(85784031284-20805683692\sqrt{17})t^5\\&\quad+(197798732932\nonumber-47973239324\sqrt{17})t^6+(-193832516055+47011290433\sqrt{17})t^7\\&\quad+(-132672807253\nonumber+32177882227\sqrt{17})t^8+(75432873692-18295159172\sqrt{17})t^9\\&\quad+(170513297282\nonumber-41355549134\sqrt{17})t^{10}+(-33414320906+8104163206\sqrt{17})t^{11}\\&\quad+2(-22164247583\nonumber+5375619641\sqrt{17})t^{12}.
	\end{align} 

	The equality  is achieved for the function 
	$f_a$ defined by \eqref{Eq-2.2}.
\end{thm}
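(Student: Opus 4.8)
The plan is to run the scheme of Theorem~\ref{th-3.2} essentially verbatim, the only change being that the area estimate \eqref{ee-2.1} for $S_r/\pi$ is replaced by its counterpart \eqref{e-11.18} for $S_r/(\pi-S_r)$; because both estimates are attained by the Möbius map $f_a$ of \eqref{Eq-2.2}, the two arguments are structurally identical and differ only in the closing single-variable optimisation. Write $a:=|a_0|=|f(0)|\in[0,1)$. The first goal is to dominate $\mathcal{J}_f(r)$ by a function $\Theta(a,r)$ of the two real parameters $a,r$ alone, chosen so that $\Theta(a,r)=\mathcal{J}_{f_a}(r)$, i.e. so that every inequality used is an equality for $f_a$.

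For the part $|f(z)|+|f'(z)|r+B_2(f,r)+A(f_0,r)$ that already occurs in Theorem~D, I would rerun the argument of that theorem but keep the sharp two-variable majorant rather than collapsing it to $1$. The Schwarz--Pick bounds $|f(z)|\le(a+r)/(1+ar)$ and $|f'(z)|\le(1-|f(z)|^2)/(1-r^2)$ combine, using that $t\mapsto t+r(1-t^2)/(1-r^2)$ is increasing on $[0,1]$ for $r\le R:=(\sqrt{17}-3)/4$, to give $|f(z)|+|f'(z)|r\le\frac{a+r}{1+ar}+\frac{(1-a^2)r}{(1+ar)^2}$, with equality for $f_a$ at $z=-r$. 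The remaining analytic terms are handled through the Schwarz-function representation $f-f(0)=-(1-a^2)g/(1-\overline{a_0}g)$ with $g$ a Schwarz map, together with $\|f_0\|_r^2\le(1-a^2)^2r^2/(1-a^2r^2)$; this bounds $B_2(f,r)+A(f_0,r)$ by its $f_a$-value, the quadratic $A$-term compensating the linear $B_2$-term away from the extremal. Finally, since $x\mapsto\frac{221-43\sqrt{17}}{64}x+\mu x^2$ is increasing for $x\ge0$, the bound \eqref{e-11.18} converts the two area terms into their $f_a$-values as well. Summing yields $\mathcal{J}_f(r)\le\Theta(a,r)=\mathcal{J}_{f_a}(r)$.

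It then remains to prove $\Theta(a,r)\le1$ for all $a\in[0,1)$ and $r\le R$. Each summand of $\Theta$ is increasing in $r$, so it suffices to treat $r=R$ and analyse $g(a):=\Theta(a,R)$, which extends continuously to $a=1$ with $g(1)=1$. The endpoint $a=1$ is a boundary tangency inherited from Theorem~D, and the linear coefficient $\frac{221-43\sqrt{17}}{64}$ is precisely the value preserving this tangency at $r=R$ (the quadratic term contributes at order $(1-a^2)^4$ near $a=1$ and so leaves it undisturbed). The constant $\mu$ is then the largest one for which $g\le1$ on $[0,1]$, and it is forced by an interior touching point, i.e. by the pair of conditions $g(a_{**})=1$ and $g'(a_{**})=0$. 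Eliminating $\mu$ between these two equations, after clearing the denominators $(1+ar)^2$, $1-ar$, $1-a^2r^2$ and $(1-r^2)(1-a^4r^2)$ and substituting $R=(\sqrt{17}-3)/4$ (which satisfies $2R^2+3R-1=0$), gives the degree-$12$ equation $\Psi_2^*(t)=0$; its unique root in $(0,1)$ is $a_{**}\approx0.565671$, and feeding it back into $g(a_{**})=1$ produces the closed form $\mu=E_1(a_{**})/(128(1+a_{**})^3E_2(a_{**}))=16.0824\dots$. Sharpness is then automatic, since $f_{a_{**}}$ realises equality at $r=R$ by construction.

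The hard part is twofold. First, one must secure the estimate for $B_2(f,r)+A(f_0,r)$ in a form that is simultaneously valid for every $f\in\mathcal{B}$ and exact for $f_a$: a termwise bound on $B_2$ is hopeless (already $f(z)=z^2$ violates $B_2(f,r)\le B_2(f_a,r)$), so the linear sum and the quadratic term $A$ must be controlled jointly, which is exactly what the weight $\frac{1}{1+a}+\frac{r}{1-r}$ of $A(f_0,r)$ is engineered to do. Second, and more seriously, the final single-variable analysis at $r=R$ is a genuinely heavy symbolic computation: the appearance of $\sqrt{17}$ in every coefficient, the degree-$12$ elimination yielding $\Psi_2^*$, the verification that $\Psi_2^*$ has a unique zero in $(0,1)$, and the extraction of the explicit expressions $E_1,E_2$ all effectively require computer algebra, and confirming that $g''(a_{**})\le0$ (so that $a_{**}$ is a true maximum and no larger $\mu$ is admissible) is the delicate endgame.
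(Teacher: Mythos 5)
Your proposal is correct and follows essentially the same route as the paper: Schwarz--Pick bounds for $|f(z)|+|f'(z)|r$, the refined bound $B_2(f,r)+A(f_0,r)\le (1-a^2)r^2/(1-r)$ (which the paper obtains by directly invoking Lemma A of Liu--Liu--Ponnusamy rather than re-deriving it from the Schwarz-function representation as you sketch), the estimate \eqref{e-11.18} for $S_r/(\pi-S_r)$ with equality for $f_a$, monotonicity in $r$ to reduce to $r=(\sqrt{17}-3)/4$, and then the interior double-zero/tangency analysis in $a$ that forces the degree-$12$ equation $\Psi_2^*(t)=0$, the root $a_{**}$, and the value of $\mu$, with sharpness realized by $f_{a_{**}}$. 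The only differences are organizational (the paper phrases the endgame as showing $E_3(a)\ge 0$ via its unique stationary point together with $E_3(0)>0$, $E_3(1)>0$, rather than via $g''(a_{**})\le 0$), not mathematical.
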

 To prove Theorems \ref{th-3.2} and \ref{th-3.3}, we take the help of the following lemma established by Liu \emph{et al.} \cite{Liu-Liu-Ponnusamy-2021}.
\begin{lemA}\cite{Liu-Liu-Ponnusamy-2021}\label{lem-44.1}
	Suppose that $ f(z)=\sum_{n=0}^{\infty}a_nz^n\in\mathcal{B} $. Then for any N$\in\mathbb{N}$, the following inequality holds:
	\begin{align*}
		B_N(f,r)+sgn(t)\sum_{n=1}^{t}|a_n|^2\dfrac{r^N}{1-r}+\left(\dfrac{1}{1+|a_0|}+\dfrac{r}{1-r}\right)\sum_{n=t+1}^{\infty}|a_n|^2r^{2n}\leq \dfrac{(1-|a_0|^2)r^N}{1-r}, 
	\end{align*}
	\;\;\mbox{for}\;\; $ r\in[0,1),$ where $t=\lfloor{(N-1)/2}\rfloor.$
\end{lemA}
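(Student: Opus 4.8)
The plan is to bound $\mathcal{J}_f(r)$ termwise by an explicit function $\Phi(a,r)$ of $a:=|a_0|$ and $r$, to reduce the estimate to the endpoint radius $r_0:=(\sqrt{17}-3)/4$ (which satisfies $2r_0^2+3r_0-1=0$, the same radius as in Theorems C and D), and finally to settle the resulting one-variable inequality $\Phi(a,r_0)\le 1$ on $[0,1)$ by a tangency argument that produces $a_{**}$, $\mu$ and $\Psi_2^*$. For the Bohr--Rogosinski head, write $|z|=r$ and $s:=|f(z)|$. The Schwarz--Pick lemma gives $s\le (a+r)/(1+ar)$ and $|f'(z)|\le (1-s^2)/(1-r^2)$, and the auxiliary map $h(s)=s+r(1-s^2)/(1-r^2)$ satisfies $h'(s)=1-2rs/(1-r^2)>0$ for every $s\in[0,1]$ once $r\le\sqrt2-1$; since $r_0<\sqrt2-1$, $h$ is increasing, so
\[
|f(z)|+r|f'(z)|\le \frac{a+r}{1+ar}+\frac{r(1-a^2)}{(1+ar)^2},
\]
where I used $1-\big((a+r)/(1+ar)\big)^2=(1-a^2)(1-r^2)/(1+ar)^2$.

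Next I would apply Lemma A with $N=2$. Then $t=\lfloor 1/2\rfloor=0$, so $\mathrm{sgn}(t)=0$ and the middle sum disappears, leaving exactly
\[
B_2(f,r)+A(f_0,r)\le \frac{(1-a^2)r^2}{1-r}.
\]
For the area contribution I would invoke the estimate \eqref{e-11.18}, namely $S_r/(\pi-S_r)\le T(a,r):=r^2(1-a^2)^2/\big[(1-r^2)(1-r^2a^4)\big]$; because the map $x\mapsto c_1x+\mu x^2$ with $c_1=(221-43\sqrt{17})/64>0$ and $\mu>0$ is increasing on $[0,\infty)$, the two area terms are at most $c_1T(a,r)+\mu T(a,r)^2$. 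Summing the three displays gives $\mathcal{J}_f(r)\le\Phi(a,r)$, where $\Phi$ is the sum of the four explicit functions of $(a,r)$ just obtained. Each summand of $\Phi$ is nondecreasing in $r$ on $(0,r_0]$ (for instance $r/(1+ar)^2$ has derivative $(1-ar)/(1+ar)^3>0$, and $T$ is patently increasing), hence $\Phi(a,r)\le\Phi(a,r_0)$ for $r\le r_0$, and it suffices to prove $\Phi(a,r_0)\le1$ for all $a\in[0,1)$.

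It remains to establish the one-variable bound $\Phi(a,r_0)\le 1$. As $a\to1^-$ one checks $\Phi(a,r_0)\to1$, and expanding $a=1-\varepsilon$ shows the coefficient of $\varepsilon$ equals $(2r_0^2+3r_0-1)(r_0^2+1)/\big[(1+r_0)^2(1-r_0)\big]$, which vanishes exactly because $2r_0^2+3r_0-1=0$; this is the structural reason that the radius is forced to be $r_0$ and no larger. Consequently the maximum of $\Phi(\cdot,r_0)$ is attained at an interior point $a_{**}\in(0,1)$, where I would impose the tangency conditions $\Phi(a_{**},r_0)=1$ and $\partial_a\Phi(a_{**},r_0)=0$. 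Solving the first condition for $\mu$ yields the stated quotient $\mu=E_1(a_{**})/\big[128(1+a_{**})^3E_2(a_{**})\big]$; substituting this into the second condition and clearing denominators --- using $2r_0^2+3r_0-1=0$ together with $r_0=(\sqrt{17}-3)/4$ to rewrite every power of $r_0$ as an element of $\mathbb{Q}[\sqrt{17}]$ --- collapses to the degree-twelve equation $\Psi_2^*(a_{**})=0$. One then verifies that $\Psi_2^*$ has the unique root $a_{**}\approx0.565671$ in $(0,1)$, that the associated $\mu=16.0824\ldots$ is positive, and that this critical value is the global maximum of $\Phi(\cdot,r_0)$ on $[0,1)$, so $\Phi(a,r_0)\le1$ throughout. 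Sharpness follows by taking $f=f_{a_{**}}$ and evaluating at $z=-r$, which makes Schwarz--Pick, Lemma A and \eqref{e-11.18} simultaneously equalities.

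The main obstacle is the last step: the tangency elimination is a substantial computer-algebra computation --- the explicit coefficients of $\Psi_2^*$, $E_1$ and $E_2$ are exactly its output --- and one must certify both that $a_{**}$ is the \emph{unique} admissible root of $\Psi_2^*$ in $(0,1)$ and that the interior critical point gives the global (not merely local) maximum, i.e. that $G(a):=1-\Phi(a,r_0)$ stays nonnegative on $[0,1)$ with a double zero only at $a_{**}$.
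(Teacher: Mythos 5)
Your proposal does not prove the statement it was asked to prove. The statement in question is Lemma~A, the radius-free coefficient inequality
\begin{align*}
B_N(f,r)+\operatorname{sgn}(t)\sum_{n=1}^{t}|a_n|^2\dfrac{r^N}{1-r}+\left(\dfrac{1}{1+|a_0|}+\dfrac{r}{1-r}\right)\sum_{n=t+1}^{\infty}|a_n|^2r^{2n}\leq \dfrac{(1-|a_0|^2)r^N}{1-r},
\end{align*}
valid for \emph{all} $r\in[0,1)$ and all $N\in\mathbb{N}$ with $t=\lfloor(N-1)/2\rfloor$. What you have written is instead a proof sketch of Theorem \ref{th-3.3}, the sharp bound $\mathcal{J}_f(r)\le 1$ for $r\le(\sqrt{17}-3)/4$ --- and, worse, your sketch explicitly \emph{invokes} Lemma~A as an ingredient (``Next I would apply Lemma~A with $N=2$\dots leaving exactly $B_2(f,r)+A(f_0,r)\le(1-a^2)r^2/(1-r)$''). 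Read as an argument for Lemma~A, this is circular: the assertion to be proved enters as an unproven hypothesis, and nothing else in your write-up (Schwarz--Pick for $|f(z)|$ and $|f'(z)|$, monotonicity in $r$, the tangency elimination producing $\mu$, $a_{**}$ and $\Psi_2^*$) bears on the coefficient inequality at all. The structural mismatch is visible on its face: Lemma~A contains no distinguished radius, whereas your entire argument is organized around $r_0=(\sqrt{17}-3)/4$.

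For calibration: the paper itself gives no proof of Lemma~A either --- it is quoted verbatim from Liu, Liu and Ponnusamy \cite{Liu-Liu-Ponnusamy-2021} --- so the comparison to make is with that source, not with the proofs of Theorems \ref{th-3.2}--\ref{th-3.3}. A genuine proof is a pure power-series estimate: it starts from the Schwarz--Pick coefficient bound $|a_n|\le 1-|a_0|^2$ for $n\ge1$ (whence, for instance, $\frac{1}{1+|a_0|}|a_n|^2\le(1-|a_0|)|a_n|$), writes the slack as $\frac{(1-|a_0|^2)r^N}{1-r}-B_N(f,r)=\sum_{n\ge N}\bigl(1-|a_0|^2-|a_n|\bigr)r^n$, and splits the quadratic sums at $t=\lfloor(N-1)/2\rfloor$, which is exactly the largest index with $2t<N$: for $n\ge t+1$ one has $2n\ge N$, so $\sum_{n\ge t+1}|a_n|^2r^{2n}$ can be absorbed into the geometric tail, while the finitely many low-order terms $n\le t$ must be carried with the weight $r^N/(1-r)$ --- which is precisely why the $\operatorname{sgn}(t)$ case split and the factor $\frac{1}{1+|a_0|}+\frac{r}{1-r}$ appear in the statement. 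None of this bookkeeping occurs in your proposal. (As an aside, viewed as a sketch of Theorem \ref{th-3.3} your text does track the paper's actual proof of that theorem, and even corrects its misprint ``Lemma~\ref{lem-44.1} with $N=1$'' to the correct $N=2$; but that is not the statement you were given.)
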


\begin{proof}[\bf Proof of Theorem \ref{th-3.2}]
	By assumption $ f(z)=\sum_{n=0}^{\infty}a_nz^n\in\mathcal{B}.$ Since $f(0)=a_0,$ by the consequence of Schwarz-Pick Lemma \cite{Ruscheweyh-1985} we can prove that for $z=re^{i\theta}\in \mathbb{D},$
	
	\begin{align}\label{e-5.1}
		|f(z)|\leq \dfrac{r+|a_0|}{1+r|a_0|}\;\;\mbox{for}\;\; |z|\leq r.
	\end{align}
	
Let $|a_0|=a\in(0,1)$. In view of \eqref{ee-2.1}, \eqref{e-5.1}  and by Lemma \ref{lem-44.1} with $N=1$, 	we obtain
\begin{align*}
	\mathcal{A}_{f}(r)& \leq \frac{r+a}{1+ra}+\frac{r}{1-r^2}\left(1-\left(\frac{r+a}{1+ra}\right)^2\right)+\frac{(1-a^2)r^2}{1-r}\\&\quad+\frac{221-43\sqrt{17}}{64}\frac{(1-a^2)^2r^2}{(1-a^2r^2)^2}+\lambda\left(\frac{(1-a^2)^2r^2}{(1-a^2r^2)^2}\right)^2\\&=\frac{r+a}{1+ra}+\frac{r(1-a^2)}{(1+ar)^2}+\frac{(1-a^2)r^2}{1-r}+\frac{221-43\sqrt{17}}{64}\frac{(1-a^2)^2r^2}{(1-a^2r^2)^2}\\&+\lambda\left(\frac{(1-a^2)^2r^2}{(1-a^2r^2)^2}\right)^2:=\mathcal{A}^*_{f}(r) 
\end{align*}
Since $\mathcal{A}^*_{f}(r)$ is an increasing function of $r$, we have for $r\leq(\sqrt{17}-3)/4$,
\begin{align*}
	\mathcal{A}^*_{f}(r)&\leq\mathcal{A}^*_{f}((\sqrt{17}-3)/4)\\&\quad=\frac{(-3+\sqrt{17})+4a}{4+\left(-3+\sqrt{17}\right)a}+\frac{(-3+\sqrt{17})^2(1-a^2)}{4(7-\sqrt{17})}+\frac{4(-3+\sqrt{17})(1-a^2)}{\left(4+(-3+\sqrt{17})|a_0|\right)^2}\\&\quad+\frac{(-3+\sqrt{17})^2(221-43\sqrt{17})(1-a^2)^2}{16\left(8+(-13+3\sqrt{17})a^2\right)^2}+\lambda\frac{16(-3+\sqrt{17})^4(1-a^2)^4}{\left(8+(-13+3\sqrt{17})a^2\right)^4}\\&=1-\dfrac{8(1-a)^3A_3(a)}{(7-\sqrt{17})(4+(-3+\sqrt{17
		})a)^2(8+(-13+3\sqrt{17})a^2)^4},
\end{align*}
where \begin{align}\label{e-5.4}
	A_3(a):=&64(-11563+2829\sqrt{17})-128(-6311+1521\sqrt{17})a\\&\quad+\nonumber(7349872-1782544\sqrt{17})a^2+16(-368037+89251\sqrt{17})a^3\\&\quad\nonumber+4(-6825181+1655339\sqrt{17})a^4-256(-44509+10795\sqrt{17})a^5\\&\quad+(46228497-11212055\sqrt{17})a^6+(2654823-643889\sqrt{17})a^7\nonumber\\&\quad+4(-7818303+1896217\sqrt{17})a^8+\nonumber4(-4165553+1010295\sqrt{17})a^9\\&\quad\nonumber-128\lambda(1-a)(1+a)^4\times L_a,\nonumber
\end{align} 
where $L_a$ is defined as 
\begin{align*}
	L_a=(3580-868\sqrt{17})+4(-3187+773\sqrt{17})a+(11351-2753\sqrt{17})a^2.
\end{align*}
Next, we verify that the function $A_3(a)$ has exactly one stationary point $a_*\approx 0.600976$ in $(0,1), $ which is the unique root in $(0,1)$ of the equation $A_3(a)=0$. We remark that $A_3^{\prime}(a)=0$ is equivalent to 
\begin{align*}
	&128(1+a)^3\lambda  A_2(a)=A_1(a),
\end{align*}
 
from which we obtain the value of $\lambda$ mentioned in the statement of the theorem. By a computation using Mathematica, we obtain that the number $a_*\approx 0.600976$ is the unique root in $(0,1)$ of the equation $A_3^{\prime}(a)=0$, where 
\begin{align*}
	A_3^{\prime}(a)=&A_1(a)-128\times18.0215\times(1+a)^3[(2008-488\sqrt{17})+(33442-8110\sqrt{17})a\\&\quad+3(-36847+8937\sqrt{17})a^2+(79457-1927\sqrt{17})a^3].
\end{align*} The next step involves substituting the value of $\lambda$ into the expression of $A_3(a_*)$, resulting the following
\begin{align*}
&A_3(a_*)=\dfrac{8\Psi^{*}_1(a_*)}{A_2(a_{**})},	
\end{align*}
where $\Psi_1^*(t)$ is given by \eqref{eq-2.2}.
It can be easily seen that $a_*$ is the unique root in $(0,1) $ of the equation  $\Psi_1^*(t)=0.$ Thus  $A_3(a_*)=0$ and  $A_3^{\prime}(a_*)=0$. Besides this observation, we have $A_3(0)=5052.19>0$ and $A_3(1)=10949.3>0.$ Consequently, $A_3(a)\geq0 $ in the interval $(0,1) $, which proves that $\mathcal{A}_{f}(r)\leq 1$ for $r\leq(\sqrt{17}-3)/4.$  
Finally, to prove that the constant $\lambda$ is sharp, we consider the function $f_a$ is defined by \eqref{Eq-2.2}
for some value of $a$ in $[0,1).$\vspace{1.2mm}

 For $f_a$, a straightforward calculation shows that 
\begin{align*}
	\mathcal{A}_{f_a}(r)&=|f(z)|+|f^{\prime}(z)|r+B_2(f,r)+\left(\frac{1}{1+|a_0|}+\frac{r}{1-r}\right){||f_0||}^2\\&\quad+\frac{221-43\sqrt{17}}{64}\left(\frac{S_r}{\pi}\right)+\lambda_1\left(\dfrac{S_r}{\pi}\right)^2\\&=\left(\dfrac{a+r}{1+ra}\right)+\dfrac{(1-a^2)r}{(1+ar)^2}+\dfrac{a(1-a^2)r^2}{(1-ar)}+\dfrac{(1+ar)(1-a^2)^2r^2}{(1+a)(1-r)(1-r^2a^2)}\\&\quad+\frac{221-43\sqrt{17}}{64}\dfrac{(1-a^2)^2r^2}{(1-r^2a^2)^2}+\lambda_1\left(\dfrac{(1-a^2)^2r^2}{(1-a^2r^2)^2}\right)^2\\&=\left(\dfrac{a+r}{1+ra}\right)+\dfrac{(1-a^2)r}{(1+ar)^2}+\dfrac{(1-a^2)r^2}{1-r}+\frac{221-43\sqrt{17}}{64}\dfrac{(1-a^2)^2r^2}{(1-r^2a^2)^2}\\&\quad+\lambda_1\left(\dfrac{(1-a^2)^2r^2}{(1-a^2r^2)^2}\right)^2
\end{align*} 
For $r=(\sqrt{17}-3)/4$, the last expression becomes 
\begin{align*}
		\mathcal{A}_{f_a}((\sqrt{17}-3)/4)&=\frac{(-3+\sqrt{17})+4a}{4+\left(-3+\sqrt{17}\right)a}+\frac{(-3+\sqrt{17})^2(1-a^2)}{4(7-\sqrt{17})}\\&\quad+\frac{4(-3+\sqrt{17})(1-a^2)}{\left(4+(-3+\sqrt{17})|a_0|\right)^2}+\frac{(-3+\sqrt{17})^2(221-43\sqrt{17})(1-a^2)^2}{16\left(8+(-13+3\sqrt{17})a^2\right)^2}\\&\quad+16(-3+\sqrt{17})^4\lambda\times\frac{(1-a^2)^4}{\left(8+(-13+3\sqrt{17})a^2\right)^4}\\&\quad+(\lambda_1-\lambda)\frac{16(-3+\sqrt{17})^4(1-a^2)^4}{\left(8+(-13+3\sqrt{17})a^2\right)^4}.
\end{align*} 

Choose $a$ as the unique root $a_*$ in $(0,1)$ of the equation $\Psi^*_1(t)=0.$ As a  consequence, we see that
\begin{align*}
	\mathcal{A}_{f_a}((\sqrt{17}-3)/4)=1+16(-3+\sqrt{17})^4(\lambda_1-\lambda)\frac{(1-a^2)^4}{\left(8+(-13+3\sqrt{17})a^2\right)^4}
\end{align*}
which is bigger than $1$ in case $\lambda_*>\lambda.$ This proves the sharpness assertion and the proof of theorem is completed.
\end{proof}
\begin{proof}[\bf Proof of Theorem \ref{th-3.3}]
	Let $|a_0|=a\in(0,1)$. In view of \eqref{e-11.18}, \eqref{e-5.1}
 and Lemma \ref{lem-44.1}	with $N=1, $ we obtain 
 \begin{align*}
 	\mathcal{J}_{f}(r)\leq& \frac{r+a}{1+ra}+\frac{r}{1-r^2}\left(1-\left(\frac{r+a}{1+ra}\right)^2\right)+\frac{(1-a^2)r^2}{1-r}+\frac{221-43\sqrt{17}}{64}\times\\&\quad\frac{(1-a^2)^2r^2}{(1-r^2)(1-r^2a^4)}+\mu\left(\frac{(1-a^2)^2r^2}{(1-r^2)(1-r^2a^4)}\right)^2\\&=\frac{r+a}{1+ra}+\frac{r(1-a^2)}{(1+ar)^2}+\frac{(1-a^2)r^2}{1-r}+\frac{221-43\sqrt{17}}{64}\frac{(1-a^2)^2r^2}{(1-r^2)(1-r^2a^4)}\\&\quad+\mu\left(\frac{(1-a^2)^2r^2}{(1-r^2)(1-r^2a^4)}\right)^2:=\mathcal{J}^*_{f}(r).
 \end{align*}
Since $\mathcal{J}^*_{f}(r)$ is an increasing function of $r$ , we have for $\mathcal{J}^*_{f}((\sqrt{17}-3)/4)\leq 1$  that
\begin{align*}
\mathcal{J}_{f}(r)&\leq\mathcal{J}^*_{f}((\sqrt{17}-3)/4)\\&=\frac{(-3+\sqrt{17})+4a}{4+\left(-3+\sqrt{17}\right)a}+\frac{(-3+\sqrt{17})^2(1-a^2)}{4(7-\sqrt{17})}+\frac{4(-3+\sqrt{17})(1-a^2)}{\left(4+(-3+\sqrt{17})|a_0|\right)^2}\\&\quad+\frac{(-3+\sqrt{17})^2(221-43\sqrt{17})(1-a^2)^2}{16(-5+3\sqrt{17})\left(8+(-13+3\sqrt{17})a^4\right)}+\frac{16\;\mu\;(-3+\sqrt{17})^4(1-a^2)^4}{(5-3\sqrt{17})^2\left(8+(-13+3\sqrt{17})a^4\right)^2}\\&=1-\dfrac{8(1-a)^3E_3(a)}{(7-\sqrt{17})(4+(-3+\sqrt{17})a)^2(8+(-13+3\sqrt{17})a^2)^4},
\end{align*}
where 
\begin{align}\label{ee-2.8}
	E_3(a):=&Q_1+Q_1a+Q_1a^2+Q_1a^3+Q_1a^4+Q_1a^5+Q_1a^6+Q_1a^7+Q_1a^8\\&\quad+Q_1a^9-128\mu(1-a)\nonumber(1+a)^4Q_{10},
\end{align}
with
\begin{align*}
	\begin{cases}
		&Q_0=(2689648-651024\sqrt{17}),\;Q_1=(321568-77024\sqrt{17}),\\&Q_2=4(-1166421+282995\sqrt{17}),\; Q_3=(762484-184748\sqrt{17}),\\&Q_4=4(-974531+236341\sqrt{17}), \;Q_5=-8(-181177+43951\sqrt{17}),\\&Q_6=(14798109-3589099\sqrt{17}),\;Q_7=(-2416805+586147\sqrt{17}),\\&Q_8=4(-3664887+888865\sqrt{17}),\;Q_9=4(-1952633+473583\sqrt{17}),\\&Q_{10}=(3580-868\sqrt{17})+4(-3187+773\sqrt{17})a-(-11351+2753\sqrt{17})a^2.
	\end{cases}
\end{align*}
Following the idea of the proof of Theorem \ref{th-3.2} we confirm that the function $E_3(a)$ has exactly one stationary point $a_{**}\approx 0.565671$ in $(0,1). $ In fact, through the calculation of Mathematica, we obtain that the number $a_{**}\approx 0.565671$  is the unique root in $(0,1)$ of the equation $	E^{\prime}_3(a)=0$ also.Where 
\begin{align*}
	E^{\prime}_3(a)=E_1(a)-128\times16.0824\dots\times(1+a)^3E_2(a)
\end{align*}

Now we plug the value the value of $\mu$ defined in the statement of the theorem into the expression of $E_3(a_{**}).$ Which gives
\begin{align*}
	E_3(a_{**})=\frac{8 \Psi^*_2(a_{**})}{E_2(a_{**})}
\end{align*}
where $\Psi_2^*(t)$ is given by \eqref{e-3.2}.
It can be easily seen that $a_{**}$ is the unique root in $(0,1) $ of the equation  $\Psi_2^*(t)=0.$ Thus  $E_3(a_{**})=0$ and  $E_3^{\prime}(a_{**})=0$. Besides this observation, we have $E_3(0)=4287.01>0$ and $E_3(1)=10949.3>0.$ Consequently, $E_3(a)\geq0 $ in the interval $(0,1) $, which proves that $\mathcal{J}_{f}(r)\leq 1$ for $r\leq(\sqrt{17}-3)/4.$  
By the similar argument of the previous theorem we can easily prove the sharpness assertion of $\mu
$ with the help of the function $f_a$.
\end{proof}
\noindent{\bf Acknowledgment:} The authors would like to express their great indebtedness to the anonymous referees for their elaborate comments and valuable suggestions, which significantly improved the presentation of the paper. The second author is supported by UGC-JRF (NTA Ref. No.: $ 201610135853 $), New Delhi, India. \vspace{2mm}

\noindent\textbf{Compliance of Ethical Standards}\\

\noindent\textbf{Conflict of interest} The authors declare that there is no conflict  of interest regarding the publication of this paper.\vspace{1.5mm}

\noindent\textbf{Data availability statement}  Data sharing not applicable to this article as no datasets were generated or analysed during the current study.

\end{document}